\newtheorem{theorem}{Theorem}
\newtheorem{lemma}[theorem]{Lemma}
\newtheorem{proposition}[theorem]{Proposition}
\newtheorem{corollary}[theorem]{Corollary}
\newtheorem{remark}[theorem]{Remark}
\author{Pierluigi Vellucci, Alberto Maria Bersani}
\title{The class of Lucas-Lehmer polynomials.}
\begin{document}
\maketitle

\begin{abstract}
In this paper we introduce a new sequence of polynomials, which follow the same recursive rule of the well-known Lucas-Lehmer integer sequence. We show the most important properties of this sequence, relating them to the Chebyshev polynomials of the first and second kind.
\end{abstract}

\section{Introduction}

In \let\thefootnote\relax\footnotetext{2010 Mathematics Subject Classification: 42C05, 33C05.} this paper \let\thefootnote\relax\footnotetext{Key words and phrases: Chebyshev polynomials; Lucas-Lehmer primality test; Lucas-Lehmer primality numbers; orthogonal polynomials; zeros of polynomials.} we study a class of polynomials $L_n(x) = L_{n-1}^2(x) - 2$, which, at the best of our knowledge, are here introduced for the first time, created by means of the same iterative formula used to build the well-known Lucas-Lehmer sequence, employed in primality tests \cite{15:15,16:16,10:10,11:11,9:9}. It is clearly crucial to choose the first term of the polynomial sequence. In this paper we consider $L_{0}=x$.

In this paper we show some properties of these polynomials, in particular discussing the link among the Lucas-Lehmer polynomials and the Chebyshev polynomials of the first and second kind \cite{12:12,13:13,8:8}. The Chebyshev polynomials are well-known and, although they have been known and studied for a long time, continue to play an important role in recent advances in many areas of mathematics such as Algebra, Numerical Analysis, Differential Equations and Number Theory (see, for instance: \cite{babusci2014chebyshev,Bel,Bhr,Cas,dattoli2015cardan,Gul,Mihaila,Swe,Yama,BEJS}) and new other properties of theirs continue to be discovered (\cite{Bel,Cas,dattoli2001note,Dilc}).

In particular, in the spirit of some existing results on the Chebyshev polynomials and the nested square roots (see, for example, \cite{More,WitSl2}), we show that the zeros of the Lucas-Lehmer polynomials can be written in terms of nested radicals.

There are many classes of polynomials which are related to the Chebyshev polynomials, such as \cite{Bhr,Datt,HofWi,Hor,WitSl}. In the spirit of some of these works - if $L_n(x)$, $T_n(x)$, $U_n(x)$ denote (respectively) the nth Lucas-Lehmer polynomials, the Chebyshev polynomials of the first and second kind - we can consider the polynomials $L_n$ as a generalization of the so-called \emph{modified or shifted Chebyshev polynomials}, by introducing an appropriate change of variable $t=f(x)$.

We will now outline the content of this paper. In Sections \ref{sec:1} and \ref{sec:2} we introduce the Lucas-Lehmer polynomials and we show their main properties. Furthermore, we give a recursive formula for the sequence of the first nonnegative zeros of $L_n(x)$, in terms of nested radicals. In Section \ref{sec:3} we show some relations among the Lucas - Lehmer polynomials $L_n(x)$ and the Chebyshev polynomials of the first and second kind, determining several new properties for the former.

In Section \ref{sec:4} we show some generalizations of the Lucas-Lehmer map, having the same properties of $L_n$. In Section \ref{sec:6} we list some further perspectives and developments of the theory.

\section{First iterations of the Lucas-Lehmer map.}
\label{sec:1}

Let us consider the iterative map
\begin{equation}
\label{eq:Lucas_Lehmer}
L_{n}(x) =L_{n-1}(x)^{2}-2 \qquad \qquad ; \qquad \qquad L_0(x) = x \ .
\end{equation}
Assuming $L_{0}=x$ as the initial value, let us construct the first terms of the sequence.
The function $L_{1}(x)=x^{2}-2$ represents a parabola with two zeros $z_{1,2}=\pm \sqrt{2}$ and one minimum point in $(0,-2)$; $L_{2}(x)=(x^{2}-2)^{2}-2= 2 \left(1- 2 x^2 + \frac{x^4}{2} \right)$, shown in Fig. \ref{fig:L_2} contains four zeros: $z_{1\div 4}=\pm \sqrt{2 \pm \sqrt{2}}$. From the derivative of $L_{2}(x)$, $L_{2}'(x)=4x\cdot (x^{2}-2)=4x\cdot L_{1}(x)$ it is possible to determine the critical points of the function: $x_1=0$ (minimum), $x_{2,3}=\pm \sqrt{2}$ (maximum).\
Since $L_{2}(x)= 2 \left(1- 2 x^2 + \frac{x^4}{2} \right) = 2 \cos(2x)+ \textit{o}(x^3)$, for $x\rightarrow 0$ we have $L_{2}(x)\sim 2\cos(2x)$.
\begin{figure}[tb]
\centering
\includegraphics[scale=0.30]{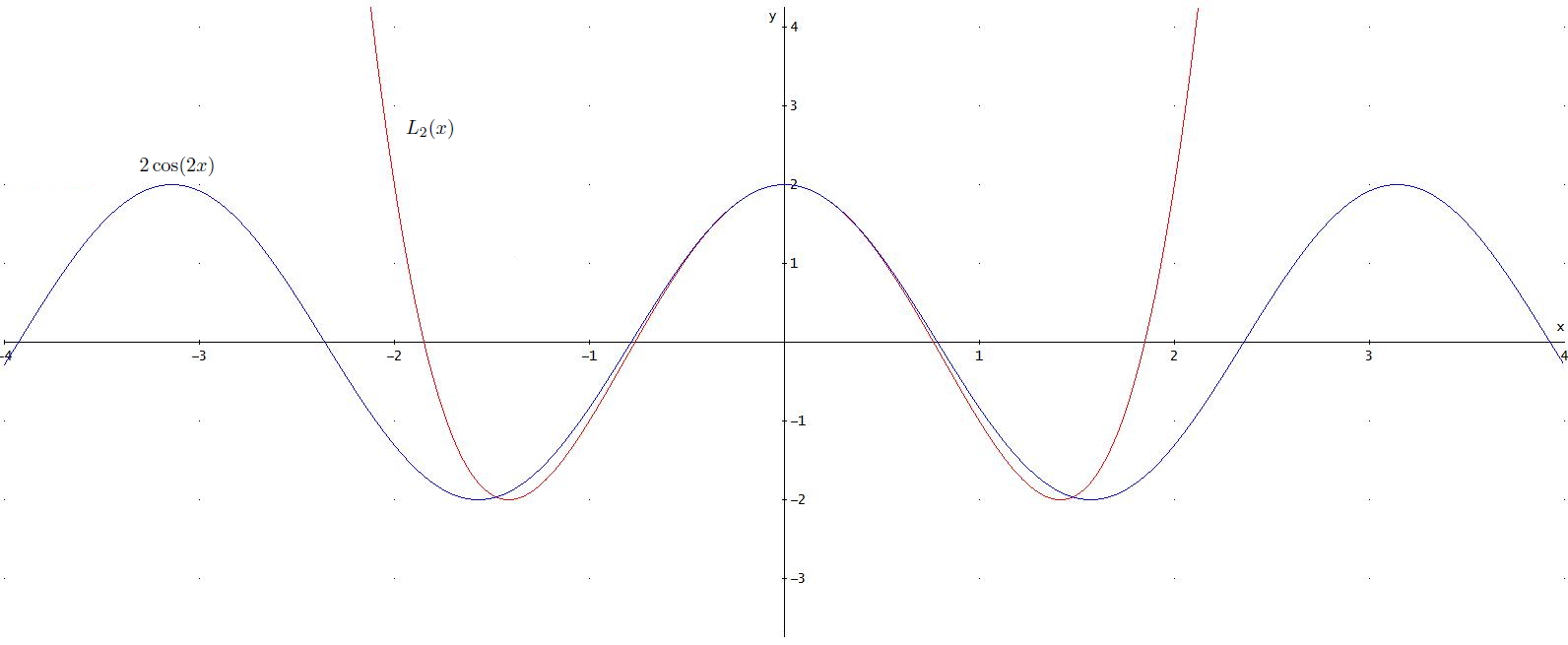}
\caption{comparison between $L_{2}(x)$ and $2\cos(2x)$.}
\label{fig:L_2}
\end{figure}

\begin{figure}[tb]
\centering
\includegraphics[scale=0.30]{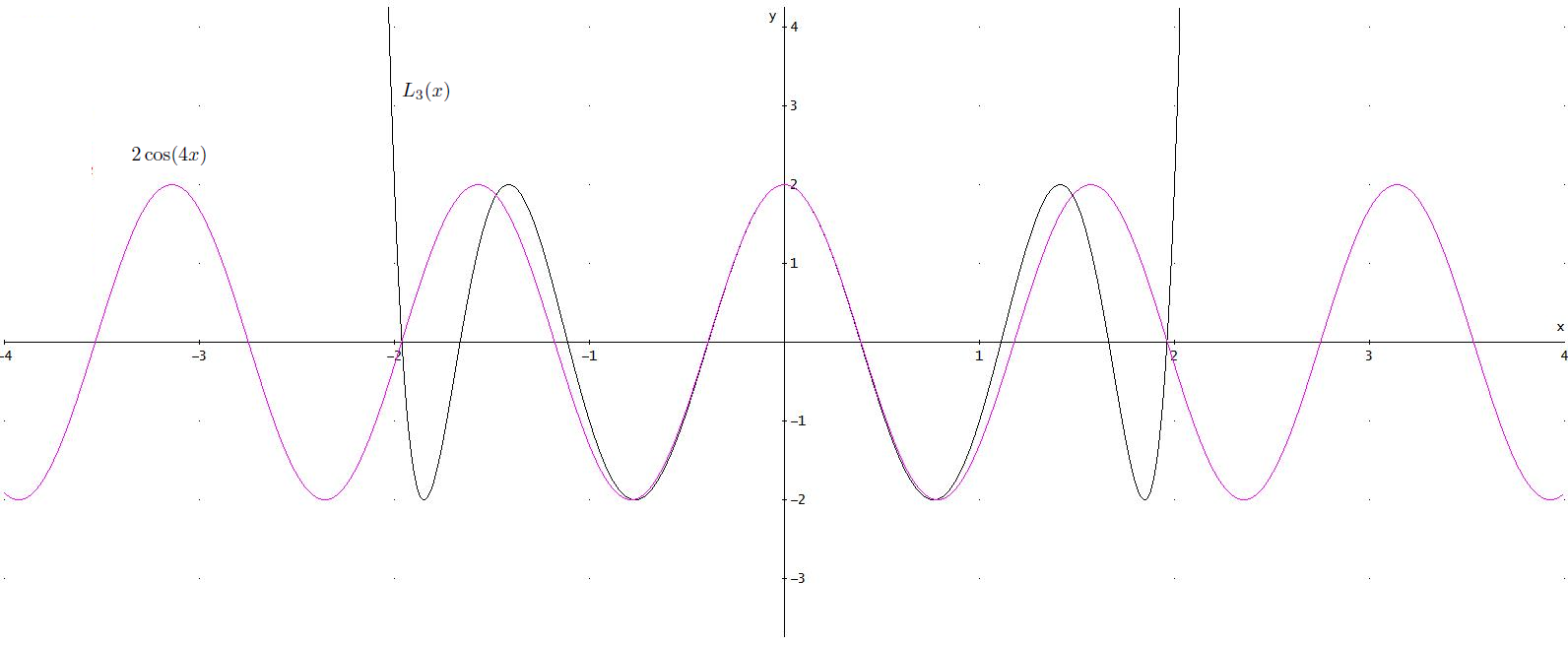}
\caption{comparison between $L_{3}(x)$ and $2\cos(4x)$.}
\label{fig:L_3}
\end{figure}
The zeros of the function $L_{3}(x)=((x^{2}-2)^{2}-2)^{2}-2 = 2 \left(1 - 8x^2 + \textit{o}(x^3)\right)$, whose graph is shown in Fig. \ref{fig:L_3}, are eight: $z_{1\div 8}=\pm \sqrt{2 \pm \sqrt{2 \pm \sqrt{2}}}$. The critical points are: $x_1 = 0$, $x_{2,3}= \pm \sqrt{2}$, $x_{4,5,6,7}=\pm \sqrt{2 \pm \sqrt{2}}$. Besides $L_{3}(x)\sim 2\cos(4x)$ for $x\rightarrow 0$.
\begin{figure}[tb]
\centering
\includegraphics[scale=0.30]{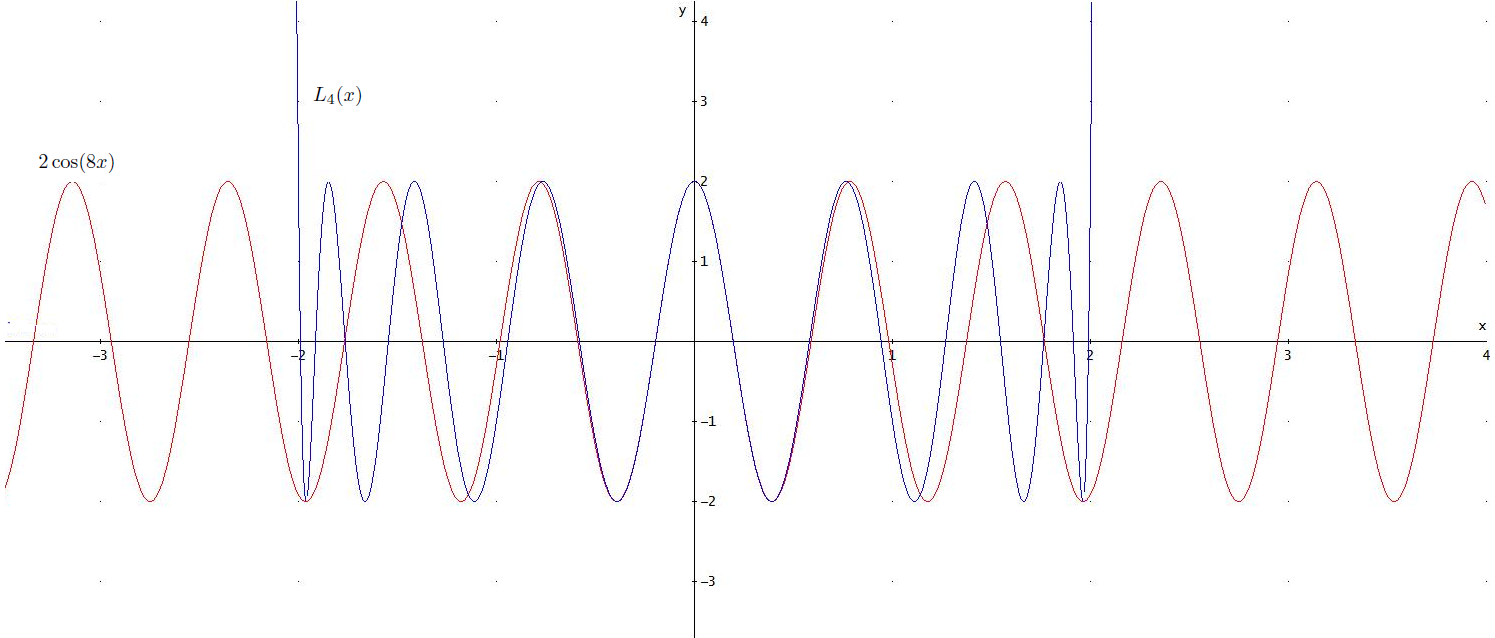}
\caption{comparison between $L_{4}(x)$ and $2\cos(8x)$.}
\label{fig:L_4}
\end{figure}
The zeros of the function (shown in Fig. \ref{fig:L_4}) $L_{4}(x)=(((x^{2}-2)^{2}-2)^{2}-2)^{2}-2$ are sixteen: $z_{1\div 16}=\pm \sqrt{2 \pm \sqrt{2 \pm \sqrt{2 \pm \sqrt{2}}}}$. The critical points follow the same general rule which is possible to guess observing the previous iterations; moreover it results again $L_{4}(x)\sim 2\cos(8x)$ for $x\rightarrow 0$.
It must be noted that: $L_1(\pm \sqrt{2}) = 0$, $L_2(\pm \sqrt{2}) = - 2$, $L_n(\pm \sqrt{2}) = 2 \quad \forall n \ge 3$; $L_0(0) = 0$, $L_1(0) = - 2$, $L_n(0) = 2 \quad \forall n \ge 2$; $L_0(-2) = -2$, $L_n(-2) = 2 \quad \forall n \ge 1$; $L_n(2) = 2 \quad \forall n \ge 0$. Let us observe that the numerical sequence $L_n(\sqrt{6})$ (OEIS, On-Line Encyclopedia of Integer Sequences, \url{http://oeis.org/A003010}) was used, as we said before, in the Lucas-Lehmer primality test ~\cite{9:9, 10:10, 11:11}.

\section{Zeros and critical points.}
\label{sec:2}

Taking into account the considerations of the previous section, we can in general state the following proposition (whose proof is quite simple and is omitted for brevity)
\begin{proposition}
At each iteration the zeros of the map $L_n (n \geq 1)$ have the form
\begin{equation}
\label{eq:prop2a}
\pm  \sqrt{2\pm\sqrt{2\pm\sqrt{2\pm\sqrt{2\pm...\pm\sqrt{2}}}}}
\end{equation}
\end{proposition}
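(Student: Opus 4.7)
The plan is to prove the proposition by repeatedly inverting the defining recurrence $L_n = L_{n-1}^2 - 2$. First I would observe that $L_n(y)=0$ is equivalent to $L_{n-1}(y)^2 = 2$, that is $L_{n-1}(y) = \pm\sqrt{2}$. Plugging this into $L_{n-1}(y) = L_{n-2}(y)^2 - 2$ yields $L_{n-2}(y)^2 = 2 \pm \sqrt{2}$, hence $L_{n-2}(y) = \pm\sqrt{2\pm\sqrt{2}}$. Continuing in the same way for $n$ steps and using $L_0(y)=y$, I would reach $y = \pm\sqrt{2\pm\sqrt{2\pm\cdots\pm\sqrt{2}}}$ with $n$ nested radicals, which is exactly the form claimed in (\ref{eq:prop2a}).

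Formally, I would package this as an induction on $k\in\{1,\ldots,n\}$ with hypothesis ``every zero $y$ of $L_n$ satisfies $L_{n-k}(y) = \pm\sqrt{2\pm\sqrt{2\pm\cdots\pm\sqrt{2}}}$ with $k$ radicals, for some choice of signs.'' The base case $k=1$ follows at once from $L_{n-1}(y)^2 = L_n(y)+2 = 2$; the inductive step from $k$ to $k+1$ is the algebraic move above applied one more time to the recurrence $L_{n-k} = L_{n-k-1}^2 - 2$. Setting $k=n$ and reading off $L_0(y)=y$ yields (\ref{eq:prop2a}).

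Before the argument is complete I would also need to verify that every nested radical in (\ref{eq:prop2a}) is a well-defined real number, i.e.\ that no square root is applied to a negative quantity. A short induction on the depth shows
\[
0 < \sqrt{2\pm\sqrt{2\pm\cdots\pm\sqrt{2}}} < 2,
\]
since if the inner expression lies in $(-2,2)$ then $2\pm(\text{inner}) \in (0,4)$, so its square root is defined and is itself bounded above by $2$. Overall I do not anticipate any serious obstacle: the proof is essentially an unwinding of the recurrence. The only point deserving a remark is that, a priori, several sign choices in (\ref{eq:prop2a}) could collapse to the same value, so the list produces at most $2^n$ distinct real zeros; since $\deg L_n = 2^n$ this list is automatically complete, and the pairwise distinctness (hence the simplicity of the zeros of $L_n$) will follow transparently from the Chebyshev connection developed in Section \ref{sec:3}.
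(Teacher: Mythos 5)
Your argument is correct: unwinding the recurrence via $L_{n-k}(y)^2 = 2 + L_{n-k+1}(y)$, together with the check that every nested radical stays in $(0,2)$ so all square roots are real, is exactly the ``quite simple'' proof that the paper explicitly omits for brevity. There is nothing to compare against in the paper itself, but your induction is the evidently intended route and is sound; the closing remark about completeness and distinctness of the $2^n$ sign choices is a sensible addition beyond what the stated proposition requires.
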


How can we order these zeros? Considering only positive zeros (being $L_n$ a symmetric function), the first sign on the left inside the root must be negative. Let us set $\sqrt{2\pm\sqrt{x_{2}}} =: \sqrt{x_1}$ and $\sqrt{2\pm\sqrt{y_{2}}} =: \sqrt{y_1}$ be two generic roots chosen from those expressed in (\ref{eq:prop2a}). We wonder when $\sqrt{x_1}>\sqrt{y_1}$. Then we have several options.
\begin{description}
  \item[opz1] If $x_{1}=2+\sqrt{x_{2}}, y_{1}=2+\sqrt{y_{2}}$ we have that: $\sqrt{x_1}>\sqrt{y_1}$ $\leftrightarrow$ $2+\sqrt{x_{2}}>2+\sqrt{y_{2}}$ whence $x_{2}>y_{2}$, that is the examination moves to the next step (and we apply again opz1,2,3,4).
  \item[opz2] If $x_{1}=2-\sqrt{x_{2}}, y_{1}=2+\sqrt{y_{2}}$ we have that: $\sqrt{x_1}>\sqrt{y_1}$ $\leftrightarrow$ $2-\sqrt{x_{2}}>2+\sqrt{y_{2}}$, that is impossible, so we have $\sqrt{x_1}<\sqrt{y_1}$.
  \item[opz3] If $x_{1}=2+\sqrt{x_{2}}, y_{1}=2-\sqrt{y_{2}}$ we have that: $\sqrt{x_1}>\sqrt{y_1}$ $\leftrightarrow$ $2+\sqrt{x_{2}}>2-\sqrt{y_{2}}$, always satisfied and we have $\sqrt{x_1}>\sqrt{y_1}$.
  \item[opz4] If $x_{1}=2-\sqrt{x_{2}}, y_{1}=2-\sqrt{y_{2}}$ we have that: $\sqrt{x_1}>\sqrt{y_1}$ $\leftrightarrow$ $2-\sqrt{x_{2}}>2-\sqrt{y_{2}}$, whence $x_{2}<y_{2}$ then we have to check the following step (we apply again opz1,2,3,4). 
\end{description}

We show now what we argued in the previous steps.

\begin{theorem}
\label{theo:relazione_fondamentale}
For $n\geq 2$
we have
\begin{equation}
\label{eq:theorem_relaz_fondam}
L_{n}(x)= 2\cos(2^{n-1}x) + o(x^{3})
\end{equation}
\end{theorem}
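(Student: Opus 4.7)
The plan is to proceed by induction on $n$, with $n = 2$ as the base case. The key algebraic identity I will exploit is the double-angle formula in the form $(2\cos\theta)^2 - 2 = 2\cos(2\theta)$, which perfectly mirrors the recursion $L_n = L_{n-1}^2 - 2$. This is really the whole reason the theorem is true: the iteration defining $L_n$ is conjugate, in a suitable sense, to the doubling map on the frequency of the cosine.

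For the base case $n = 2$, I will compute $L_2(x) = x^4 - 4x^2 + 2$ directly from the recursion and compare it with the Taylor expansion
\begin{equation*}
2\cos(2x) = 2 - 4x^2 + \tfrac{4}{3}x^4 + O(x^6).
\end{equation*}
The difference is $-\tfrac{1}{3}x^4 + O(x^6)$, which is $O(x^4)$ and therefore $o(x^3)$ as $x \to 0$.

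For the inductive step, assume $L_n(x) = 2\cos(2^{n-1}x) + R(x)$ with $R(x) = o(x^3)$. I then square and subtract $2$:
\begin{equation*}
L_{n+1}(x) = \bigl(2\cos(2^{n-1}x)\bigr)^2 - 2 + 4R(x)\cos(2^{n-1}x) + R(x)^2.
\end{equation*}
The first two terms collapse to $2\cos(2^n x)$ by the double-angle identity. The remaining terms must be shown to be $o(x^3)$: the cross term $4R(x)\cos(2^{n-1}x)$ is $o(x^3)$ because $\cos$ is bounded (in fact tends to $1$), and $R(x)^2 = o(x^6)$, which is a fortiori $o(x^3)$ near $0$.

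There is no genuine obstacle here; the only point that requires care is the bookkeeping of the Landau symbols under squaring, to make sure that the $o(x^3)$ hypothesis is preserved after multiplying the error by the bounded factor $\cos(2^{n-1}x)$ and after squaring $R(x)$. Once this is handled, the induction goes through cleanly, and the same argument in fact yields the stronger statement $L_n(x) = 2\cos(2^{n-1}x) + O(x^4)$, of which the claim in the theorem is a weaker form.
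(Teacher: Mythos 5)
Your proof is correct, but it takes a genuinely different route from the paper's. The paper reformulates the claim as the statement that $L_n$ and $2\cos(2^{n-1}x)$ share the same McLaurin coefficients up to second order, i.e.\ $L_n(x) = 2 - 4^{n-1}x^2 + o(x^3)$, and then inducts on that polynomial identity, squaring the truncated expansion $2 - 4^{n-1}x^2 + o(x^3)$ at each step. You instead keep the cosine intact and lean on the exact identity $(2\cos\theta)^2 - 2 = 2\cos(2\theta)$, so that the induction only has to track the remainder $R(x)$: the cross term $4R\cos(2^{n-1}x)$ stays $o(x^3)$ because the cosine is bounded, and $R^2=o(x^6)$. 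Your version is conceptually cleaner --- it makes explicit that the recursion is semiconjugate to frequency doubling, which is exactly the mechanism the paper only surfaces later in Corollary \ref{cor6} via $L_n(2\cos\theta)=2\cos(2^n\theta)$ --- and it has two concrete advantages: it yields the stronger conclusion $L_n(x)=2\cos(2^{n-1}x)+O(x^4)$ at no extra cost, and it avoids the mild bookkeeping slip in the paper's displayed induction step, where the remainder is written as $o(x^2)$ after squaring even though the stated theorem requires $o(x^3)$. The paper's approach buys only that it manipulates nothing but polynomials; yours is the tighter argument.
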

\begin{proof}
Taking into consideration the McLaurin expansion of the cosine, to prove formula \eqref{eq:theorem_relaz_fondam} is equivalent to show that
\begin{equation}
\label{eq:mclaurin}
L_n(x) = 2 - 2^{2n-2}x^2 + o(x^3) = 2 - 4^{n-1}x^2 + o(x^3) \ .
\end{equation}
Let us proceed by means of induction principle.
For $n=2$ we have $L_{2}(x)=(x^{2}-2)^{2}-2=$$x^{4}-4 x^{2} + 2 = 2 - 4 x^2 + o(x^3)$.
Consider then the McLaurin polynomial of the second order of $2\cos(2x)$: it is $\frac{4}{3} x^{4}-4 x^{2} + 2$, which proves the relation for $n=2$. Suppose now as true formula (\ref{eq:theorem_relaz_fondam}) for a generic index $n$ and proceed to check the case $n+1$:
\begin{align}
\label{eq:induz_npiu1_theorem_relaz_fondam}
&L_{n+1}=L_{n}^{2}-2 = [2 - 4^{n-1} x^2 + o(x^{3})]^{2}-2 = \notag \\
&=\ 2 - 4^n x^2 + o(x^{2})
\end{align}
It is also known that McLaurin polynomial of $2\cos(2^{n}x)$ is $2-2^{2n}\cdot x^{2}+R_{3}$. We can therefore conclude that $2\cos(2^{n-1}x)$ and $L_{n}(x)$ have the same coefficients up to the second order, which concludes the proof.
\end{proof}

We are interested in determining the distribution of minima and maxima for each $L_n$. To this aim, now we are going to show an important property of the polynomials $L_n$.
\begin{lemma}
\label{propo:prima}
For each $n\geq 2$ we have
\begin{equation}
\label{eq:derivata}
\frac{d}{dx}L_{n}(x)=2^{n}\ x \ \prod_{i=1}^{n-1}L_{i}(x)
\end{equation}
\end{lemma}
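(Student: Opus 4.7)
The plan is a straightforward induction on $n$, driven by the recursion $L_n = L_{n-1}^2 - 2$ and the chain rule. The key observation is that differentiating the recursion gives $L_n'(x) = 2 L_{n-1}(x) \, L_{n-1}'(x)$, so each step of the recursion multiplies the derivative by a factor of $2 L_{n-1}(x)$. This is exactly the structure needed to build up the product $\prod_{i=1}^{n-1} L_i(x)$ one factor at a time, while the powers of $2$ accumulate into $2^n$.

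For the base case $n = 2$, I would compute directly: $L_2'(x) = \frac{d}{dx}\bigl((x^2 - 2)^2 - 2\bigr) = 2(x^2 - 2)\cdot 2x = 4 x L_1(x)$, which matches $2^2 \, x \prod_{i=1}^{1} L_i(x)$. (One could even start from $n = 1$, since $L_1'(x) = 2x$ agrees with the convention that the empty product equals $1$, but the statement is only claimed for $n \geq 2$.)

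For the inductive step, assuming the formula holds at index $n$, I apply the chain rule to $L_{n+1} = L_n^2 - 2$ to get
\begin{equation*}
L_{n+1}'(x) = 2 L_n(x) \, L_n'(x) = 2 L_n(x) \cdot \Bigl(2^n x \prod_{i=1}^{n-1} L_i(x)\Bigr) = 2^{n+1} \, x \prod_{i=1}^{n} L_i(x),
\end{equation*}
which is precisely the desired formula at index $n + 1$.

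There is no real obstacle here: the proof is essentially a one-line induction, since the recursion $L_n = L_{n-1}^2 - 2$ is tailor-made so that differentiation simply tacks on the next factor $L_n$ and one more power of $2$. The only thing to be careful about is the bookkeeping of indices in the product and making sure the base case lines up with the convention used in the statement.
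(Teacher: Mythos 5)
Your proof is correct and is essentially identical to the paper's: both proceed by induction on $n$, verifying the base case $L_2'(x) = 4x\,L_1(x)$ directly and then using the chain rule on $L_{n+1} = L_n^2 - 2$ to absorb the extra factor $2L_n(x)$ into the product. No issues.
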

\begin{proof}
Let us proceed by induction. If $n=2$
\begin{equation}
\frac{d}{dx}L_{2}(x)=\frac{d}{dx}[(x^{2}-2)^{2}-2]=4x(x^{2}-2)=4x\ L_{1}(x)
\end{equation}
Now we are going to check it for $n + 1$. For the function \eqref{eq:Lucas_Lehmer}
\begin{equation}
\frac{d}{dx}L_{n+1}(x)=\frac{d}{dx}[L_{n}^{2}(x)]=2 L_{n}(x) \ \frac{d}{dx}L_{n}(x)
\end{equation}
Replacing it with (\ref{eq:derivata})  we will have at the end:
\begin{equation}
\frac{d}{dx}L_{n+1}(x)=2 L_{n} (x) \cdot \left[\ 2^{n}\ x \ \prod_{i=1}^{n-1}L_{i}(x) \right]= 2^{n+1}\ x \ \prod_{i=1}^{n}L_{i}(x)
\end{equation}

\end{proof}

Let $M_{n}$ be the set of the critical points and be $Z_{n}$ the set of the zeros of $L_{n}(x)$; we obtain the following results.
\begin{proposition}
\label{propo:seconda}
For each $n\geq 2$ we have
\begin{equation}
\label{eq:M_L}
M_{n}=M_{n-1}\cup Z_{n-1}=M_{1}\cup \bigcup_{i=1}^{n-1} Z_{i}
\end{equation}
with $card(Z_{n})=2^{n}$.
\end{proposition}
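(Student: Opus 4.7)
The plan is to deduce both $M_n$-equalities directly from the derivative formula in Lemma~\ref{propo:prima}, and to obtain $|Z_n|=2^n$ by matching $\deg L_n=2^n$ against the $2^n$ candidate zeros of nested-radical form supplied by the proposition preceding Theorem~\ref{theo:relazione_fondamentale}.

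For the first assertion I would read off Lemma~\ref{propo:prima} as
\[
L_n'(x) = 2^n\, x\prod_{i=1}^{n-1} L_i(x),
\]
so that the zero set of $L_n'$ is $\{0\}\cup Z_1\cup\cdots\cup Z_{n-1}$, i.e. $M_n = \{0\}\cup\bigcup_{i=1}^{n-1} Z_i$. A one-line computation on $L_1(x)=x^2-2$ identifies $\{0\}$ with $M_1$, producing the second equality in \eqref{eq:M_L}. Applying the same identity at level $n-1$ gives $M_{n-1} = M_1\cup\bigcup_{i=1}^{n-2} Z_i$, and separating the $Z_{n-1}$ term yields the first equality $M_n = M_{n-1}\cup Z_{n-1}$.

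For the cardinality, an immediate induction on $L_n=L_{n-1}^2-2$ gives $\deg L_n = 2^n$, hence $|Z_n|\le 2^n$. The $2^n$ sign combinations in \eqref{eq:prop2a} provide $2^n$ candidate values; each is a real number since an induction on depth shows that every partial subexpression $2\pm\sqrt{\cdots}$ stays in $(0,4)$, and a short telescoping of the recursion $L_k = L_{k-1}^2-2$ verifies that $L_n$ sends every such candidate to $0$, so each candidate is genuinely a zero. Two candidates with distinct sign patterns must differ: at the first position where the signs disagree the two quantities have opposite (nonzero) sign (since the inner radical is strictly positive), while at agreeing positions one may square and descend one level. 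This yields $2^n$ distinct real zeros, which together with the degree bound forces $|Z_n|=2^n$.

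The main obstacle is coordinating reality, zero-ness, and distinctness cleanly inside a single induction on the nesting depth. The case analysis opz1--opz4 given in the text just before Theorem~\ref{theo:relazione_fondamentale} already sketches the distinctness step (and even gives the ordering of the zeros as a bonus), but converting it into a proof that compares two arbitrary sign patterns simultaneously, rather than pair by pair, is the one piece that requires a little care. Everything else reduces to Lemma~\ref{propo:prima} and the trivial degree calculation.
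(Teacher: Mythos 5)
Your argument is correct and, for the two set equalities, runs on the same engine as the paper's proof: the paper gets $M_{n}=M_{n-1}\cup Z_{n-1}$ directly from the chain-rule factorization $L_{n}'=2L_{n-1}L_{n-1}'$ and the second equality from the product formula of Lemma \ref{propo:prima}, whereas you read both equalities off the product formula and recover the first one by comparing the unions at levels $n$ and $n-1$; that difference is cosmetic. The substantive difference is the cardinality claim. The paper's proof never addresses $card(Z_{n})=2^{n}$ at all --- it is asserted in the statement and then cited in Proposition \ref{propo:secondabis}, and the companion fact that the zeros are the nested radicals \eqref{eq:prop2a} is itself left unproved ``for brevity''. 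You actually prove it: $\deg L_{n}=2^{n}$ gives the upper bound, the $2^{n}$ sign patterns give real numbers because every partial expression stays in $(0,4)$, each such number telescopes to a zero via $L_{k}=L_{k-1}^{2}-2$, and distinct sign patterns give distinct values by inspecting the first disagreeing sign --- essentially the opz1--opz4 discussion the paper records informally but never turns into a proof. So your version supplies a complete justification of a part of the statement the paper leaves unproved; the only step deserving one more explicit line is that, in the distinctness descent, two candidates sharing all outer signs are equal if and only if their next-level subexpressions are equal, since passing from one level to the next is injective on each sign branch.
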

\begin{proof}
Let us first find the critical points of $L_{n}(x)$, imposing $\frac{d}{dx}L_{n}(x)=0$, from which it results
\begin{equation}
2 L_{n-1}(x) \ \frac{d}{dx}L_{n-1}(x)=0
\end{equation}
which vanishes either if $L_{n-1}(x)=0$ (finding the points of $Z_{n-1}$) or if $\frac{d}{dx}L_{n-1}(x)=0$ (determining the points of $M_{n-1}$). Therefore it is proved that $M_{n}=M_{n-1}\cup Z_{n-1}$.
To prove the second equality, it is sufficient to observe that the right hand side of (\ref{eq:derivata}) vanishes either if $x=0$ or if $L_i(x)=0$ for some $i=1, ..., n-1$; thus we obtain the set $\displaystyle \bigcup_{i=1}^{n-1} Z_{i}$,
which proves the statement.
\end{proof}

\begin{proposition}
\label{propo:secondabis}
For each $n\in N$ we have $card(M_{n})=2^{n}-1$. Furthermore, let $M_{n}^{+}$ be the set of the positive critical points of $L_{n}(x)$; we have that $card(M_{n}^{+})=2^{n-1}-1$.
\end{proposition}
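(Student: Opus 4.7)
The plan is to leverage Proposition \ref{propo:seconda}, which already gives us the decomposition $M_n = M_1 \cup \bigcup_{i=1}^{n-1} Z_i$ together with $\operatorname{card}(Z_i)=2^i$. Since $L_1(x)=x^2-2$ has a unique critical point at the origin, $M_1=\{0\}$, so in principle
\begin{equation*}
\operatorname{card}(M_n) \le 1 + \sum_{i=1}^{n-1} 2^i = 2^n-1,
\end{equation*}
with equality precisely when the sets $\{0\},\,Z_1,\,Z_2,\,\dots,\,Z_{n-1}$ are pairwise disjoint. The bulk of the work is establishing this disjointness.

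For disjointness of the $Z_i$'s, I would argue as follows: if $x\in Z_i$, i.e.\ $L_i(x)=0$, then by the defining recursion $L_{i+1}(x) = L_i(x)^2 - 2 = -2$ and $L_{i+2}(x) = (-2)^2 - 2 = 2$; a trivial induction then shows $L_j(x)=2$ for every $j\ge i+2$. In particular $L_j(x)\ne 0$ for every $j>i$, so $x\notin Z_j$. For disjointness from $\{0\}$, the observation already recorded in Section \ref{sec:1} that $L_1(0)=-2$ and $L_n(0)=2$ for $n\ge 2$ shows $0\notin Z_i$ for any $i\ge 1$. Combining the two gives $\operatorname{card}(M_n)=2^n-1$.

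For the positive critical points, the key remark is that $L_n(x)$ is an even function for every $n\ge 1$: this is immediate by induction, since $L_1(x)=x^2-2$ depends on $x$ only through $x^2$, and the recursion $L_{n+1}=L_n^2-2$ preserves evenness. Consequently each zero set $Z_i$ is symmetric about the origin; because we have already shown $0\notin Z_i$, its $2^i$ elements split into exactly $2^{i-1}$ positive and $2^{i-1}$ negative zeros. Excluding the single non-positive element $0\in M_1$, we obtain
\begin{equation*}
\operatorname{card}(M_n^+) = \sum_{i=1}^{n-1} 2^{i-1} = 2^{n-1}-1.
\end{equation*}

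The only nontrivial step is the disjointness argument $Z_i\cap Z_j=\emptyset$ for $i\ne j$; everything else is either counting or a direct use of evenness. I do not foresee any obstacle beyond the clean execution of the induction $L_j(x)=2$ for $j\ge i+2$ whenever $x\in Z_i$, which is what allows us to convert the inequality $\operatorname{card}(M_n)\le 2^n-1$ into an equality.
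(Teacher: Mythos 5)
Your proof is correct, and it follows the same basic strategy as the paper --- counting via the decomposition of Proposition \ref{propo:seconda} --- but it executes it differently and, in one respect, more carefully. The paper argues by induction on $n$ using the recursive form $M_{n+1}=M_n\cup Z_n$ and simply asserts, in a parenthetical remark, that $M_n\cap Z_n=\emptyset$, so that cardinalities add; you instead use the closed-form union $M_n=M_1\cup\bigcup_{i=1}^{n-1}Z_i$ and explicitly prove the pairwise disjointness of $\{0\}, Z_1,\dots,Z_{n-1}$ via the observation that $x\in Z_i$ forces $L_{i+1}(x)=-2$ and $L_j(x)=2$ for all $j\ge i+2$, hence $x\notin Z_j$ for $j>i$. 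This fills in the one step the paper leaves unjustified, and it is exactly the right argument (it is the same mechanism the paper uses later, after Proposition \ref{propo:secondabis}, to classify maxima and minima). Your treatment of $M_n^+$ via evenness of $L_n$ and the fact that $0\notin Z_i$ matches the paper's ``remove the origin and halve'' count. Both arguments rest on $card(Z_i)=2^i$ from Proposition \ref{propo:seconda}, which you, like the authors, take as given. No gaps.
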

\begin{proof}
We must show that $card(M_{n})=2^{n}-1$; proceeding by induction: if $n=1$, then $L_{1}(x)=x^{2}-2$ is a parabola having only a minimum, at the point $(0,-2)$. Now we are going to check it for $n + 1$, having assumed it true for a generic $n \geq 2$.
From proposition \ref{propo:seconda}, we have, for $n>1$:
\begin{equation}
M_{n+1}=M_{n}\cup Z_{n}\ \Rightarrow \ card(M_{n+1})=card(M_{n})+card(Z_{n})
\end{equation}
(the intersection between $M_{n}$ and $Z_{n}$ being empty). From proposition \ref{propo:seconda}, we have that $card(Z_{n})=2^{n}$; besides, by hypothesis, we know that $card(M_{n})=2^{n}-1$. Then $card(M_{n+1})=2^{n}-1+2^{n}=2^{n+1}-1$. Furthermore, if we don't consider the maximum in the origin, we will have $2^{n}-2$ critical points, half of which are positive. Therefore $card(M_{n}^{+})=2^{n-1}-1$,
\end{proof}

Proposition \ref{propo:prima} is very useful because allows us to obtain some interesting properties for the critical points of $L_n(x)$. We already observed that, for every $n \geq 2$, if $x=0$, then $\left((0-2)^2...\right)^2-2=2$ and the point is a maximum. Moreover, for every natural number $j$ such that $1<j<n-1$ we have that the points $x_0$ such that $L_j(x_0)=0$ are maximum points for $L_n$. Indeed $\left(...(\underbrace{L_j}_{=0}-2)^2...\right)^2-2=2$. Instead, the points $x$ such that $L_{n-1}(x)=0$, being $L_n(x)=\underbrace{L_{n-1}^2(x)}_{=0}-2=-2$, are minimum points for $L_n$. Now, from proposition (\ref{propo:prima}) there aren't other critical points; thus we have shown that the set of maximum points of $L_n(x)$ is: $\displaystyle \bigcup_{i=1}^{n-2} Z_{i}\cup \left\{x=0\right\}$, while the set of minimum points of $L_n(x)$ is $Z_{n-1}$.
\begin{remark}
\label{oss:maxmin}
Minimum points for $L_n(x)$ become maximum points for $L_{n+1}(x)$, maximum points for $L_n(x)$ remain maximum points for $L_{n+1}(x)$. This implies that all the local maxima of every $L_n$ are equal to $2$.
\end{remark}

\begin{corollary}
All zeros and critical points of $L_n$ belong to the interval $(-2, 2)$ \footnote{Because of the symmetry of Lucas-Lehmer polynomials, we will study only positive zeros.}.
\end{corollary}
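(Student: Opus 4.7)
The plan is to separate the claim into the zeros and the critical points, and reduce both to a single observation about the iteration $L_n = L_{n-1}^2 - 2$.

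For the zeros, I would show by induction on $n$ that $L_n(x) \ge 2$ whenever $|x| \ge 2$, and in fact $L_n(x) > 2$ whenever $|x| > 2$. The base case $L_1(x) = x^2 - 2$ is immediate: at $|x| = 2$ it equals $2$ and for $|x| > 2$ it exceeds $2$. For the inductive step, if $L_{n-1}(x) \ge 2$ then $L_n(x) = L_{n-1}(x)^2 - 2 \ge 4 - 2 = 2$, with strict inequality when $L_{n-1}(x) > 2$. Combined with the already-noted identities $L_n(\pm 2) = 2$ for $n \ge 1$, this shows $L_n(x) \ne 0$ for every $|x| \ge 2$, so every zero of $L_n$ lies in the open interval $(-2,2)$.

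For the critical points, I would invoke Lemma \ref{propo:prima}, which gives $L_n'(x) = 2^n\, x\, \prod_{i=1}^{n-1} L_i(x)$. The roots of this derivative are $x=0$ together with the zeros of $L_1, \ldots, L_{n-1}$. The origin clearly lies in $(-2,2)$, and the zeros of each $L_i$ with $i \le n-1$ lie in $(-2,2)$ by the paragraph above, which completes the argument.

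As an aesthetically nicer alternative, one could instead argue directly from Proposition \ref{eq:prop2a}: the zeros of $L_n$ have the nested-radical form $\pm\sqrt{2\pm\sqrt{2\pm\cdots\pm\sqrt{2}}}$, and a short reverse induction on the depth of nesting shows that any such nested radical lies in $(0,2)$. Indeed, if an inner expression $b$ lies in $(0,2)$, then $2\pm b\in(0,4)$, so $\sqrt{2\pm b}\in(0,2)$ again; starting from the innermost $\sqrt 2\in(0,2)$, the claim propagates outward.

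There is essentially no obstacle here — the statement is a structural consequence of the iteration and of Lemma \ref{propo:prima} — so the only thing to be careful about is verifying the strict inequality at the boundary $|x|=2$, which rules out $\pm 2$ as zeros (and is exactly where the already-recorded values $L_n(\pm 2)=2$ for $n\ge 1$ are needed).
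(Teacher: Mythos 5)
Your proposal is correct. The paper itself gives no explicit proof of this corollary; it is meant to follow from the preceding material, namely the nested-radical form of the zeros (your "aesthetically nicer alternative": the paper later records $\sqrt{2+\sqrt{2+\cdots+\sqrt{2}}}=2\cos(\pi/2^{n+1})<2$, which bounds the largest such radical) together with the identification of the critical points of $L_n$ as the origin plus the zeros of $L_1,\dots,L_{n-1}$ via Lemma \ref{propo:prima}. Your main argument takes a genuinely different route for the zeros: instead of bounding the nested radicals, you prove by induction that $L_n(x)\ge 2$ for all $|x|\ge 2$ (strictly for $|x|>2$), so $L_n$ simply cannot vanish outside $(-2,2)$. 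This is more elementary and more robust — it needs nothing about the explicit form of the roots, only the recursion and the boundary values $L_n(\pm 2)=2$, and it would survive verbatim for the generalized maps $M^a_n$ of Section \ref{sec:4} after rescaling. The reduction of the critical-point claim to the zero claim through Lemma \ref{propo:prima} is exactly the paper's mechanism. One trivial remark: the lemma is stated for $n\ge 2$, so for $n=1$ you should note separately that the only critical point of $L_1$ is the origin; this does not affect the substance of your argument.
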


\section{Relationships between Lucas-Lehmer polynomials and Chebyshev polynomials of the first and second kind, and additional properties.}
\label{sec:3}

As we know ~\cite{8:8, 12:12,13:13}, the \emph{Chebyshev polynomials of the first kind} satisfy the recurrence relation
\begin{displaymath}
\begin{cases}
T_{n}(x)=2xT_{n-1}(x)-T_{n-2}(x) \qquad n \geq 2 \\
T_{0}(x)=1, \ \ T_{1}(x)=x \\
\end{cases}
\end{displaymath}
from which it easily follows that for the $n$-th term:
\begin{equation}
T_{n}(x)=\frac{\left(x -\sqrt{x^2-1} \right)^{n}+\left( x +\sqrt{x^2-1} \right)^{n}}{2}
\end{equation}

This formula is valid in $\mathbb R$ for $|x| \geq 1$; here we assume instead that $T_n$, defined in $\mathbb R$, can take complex values, too.

\begin{proposition}
For each $n\geq1$ we have
\begin{equation}
\label{eq:propcheby1}
L_{n}(x)=2\ T_{2^{n-1}}\left(\frac{x^{2}}{2}-1\right)
\end{equation}
\end{proposition}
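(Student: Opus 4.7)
The plan is to prove the identity by induction on $n$, using the Chebyshev duplication formula $T_{2m}(y) = 2T_m(y)^2 - 1$ (which is essentially the identity $\cos(2\theta) = 2\cos^2\theta - 1$ applied to $y = \cos\theta$).

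For the base case $n=1$, I would simply compute
\begin{equation*}
2\,T_{2^{0}}\!\left(\tfrac{x^{2}}{2}-1\right) = 2\,T_{1}\!\left(\tfrac{x^{2}}{2}-1\right) = 2\left(\tfrac{x^{2}}{2}-1\right) = x^{2}-2 = L_{1}(x),
\end{equation*}
which matches the definition of $L_{1}$ from \eqref{eq:Lucas_Lehmer}.

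For the inductive step, assume $L_{n}(x) = 2\,T_{2^{n-1}}\!\left(\frac{x^{2}}{2}-1\right)$ and set $y := \frac{x^{2}}{2}-1$. Using \eqref{eq:Lucas_Lehmer} and the inductive hypothesis,
\begin{equation*}
L_{n+1}(x) = L_{n}(x)^{2}-2 = 4\,T_{2^{n-1}}(y)^{2}-2 = 2\bigl(2\,T_{2^{n-1}}(y)^{2}-1\bigr).
\end{equation*}
Invoking the Chebyshev duplication identity with $m=2^{n-1}$ gives $2\,T_{2^{n-1}}(y)^{2}-1 = T_{2^{n}}(y)$, so $L_{n+1}(x) = 2\,T_{2^{n}}(y)$, which is exactly the formula at index $n+1$.

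The only nontrivial ingredient is the Chebyshev duplication identity $T_{2m}(y) = 2T_{m}(y)^{2}-1$; I would either cite it from the standard references \cite{8:8, 12:12, 13:13} already given in the paper, or briefly justify it via the trigonometric substitution $y = \cos\theta$, under which $T_{k}(y) = \cos(k\theta)$ and the identity reduces to $\cos(2m\theta) = 2\cos^{2}(m\theta)-1$ (the analytic extension to all $y \in \mathbb{R}$ being automatic since both sides are polynomials agreeing on $[-1,1]$). Once this is in place, the induction is essentially mechanical, and no real obstacle arises.
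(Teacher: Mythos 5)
Your proof is correct, and it takes a cleaner route than the paper's. The paper also argues by induction with the same base case, but its inductive step works with the explicit closed form $L_{n}(x)=\bigl(t-\sqrt{t^2-1}\bigr)^{2^{n-1}}+\bigl(t+\sqrt{t^2-1}\bigr)^{2^{n-1}}$, $t=\frac{x^2}{2}-1$: squaring produces the cross term $2\bigl[(t-\sqrt{t^2-1})(t+\sqrt{t^2-1})\bigr]^{2^{n-1}}=2$, which cancels the $-2$. That computation is really a proof of your duplication identity $T_{2m}(y)=2T_m(y)^2-1$ in disguise, but carrying it out on the radical representation forces the authors to allow complex values of the square root for $|x|<2$ and to track that carefully. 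Your version stays entirely within polynomial identities (the duplication formula being either standard or an immediate consequence of $T_{2m}=T_2\circ T_m$ with $T_2(u)=2u^2-1$, extended from $[-1,1]$ to all of $\mathbb{R}$ by polynomial agreement), so no branch issues arise and the induction is shorter. What the paper's heavier approach buys is the explicit representation \eqref{eq:propcheby1diversa} itself, which is reused later (the decomposition $L_n = L_n^{+}+L_n^{-}$ in Theorem \ref{theo:altrarelazione_fondamentale} and the trigonometric form in Corollary \ref{cor6}); if you adopted your proof, that formula would still need to be derived separately.
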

\begin{proof}
We must show that
\begin{align}
\label{eq:propcheby1diversa}
&L_{n}(x)=\left(\frac{x^{2}}{2}-1 -\sqrt{\left(\frac{x^{2}}{2}-1\right)^2-1} \right)^{2^{n-1}}+ \notag \\
&+\left( \frac{x^{2}}{2}-1 +\sqrt{\left(\frac{x^{2}}{2}-1\right)^2-1} \right)^{2^{n-1}}
\end{align}

This formula is real for $|x| \geq 2$ and complex for $|x| < 2$ and is
true for $n=1$:
\begin{equation}
L_{1}(x)=x^{2}-2 =
\left[\frac{x^{2}}{2}-1 -\sqrt{\left(\frac{x^{2}}{2}-1\right)^2-1} \right]+\left[ \frac{x^{2}}{2}-1
+\sqrt{\left(\frac{x^{2}}{2}-1\right)^2-1} \right] \ .
\end{equation}

We assume true ($\ref{eq:propcheby1}$) for a natural $n$ and write:
\begin{align}
&L_{n+1}(t(x))=L_{n}^{2}(t(x))-2= \notag \\
&=\left(t -\sqrt{t^2-1} \right)^{2^{n}}+\left( t +\sqrt{t^2-1} \right)^{2^{n}}+\notag \\
&+2\left[\left(t -\sqrt{t^2-1} \right)\ \left( t +\sqrt{t^2-1} \right)\right]^{2^{n-1}}-2
\end{align}
where $\displaystyle t(x) = \frac{x^{2}}{2}-1$. Observing that $\left(t -\sqrt{t^2-1} \right)\ \left( t +\sqrt{t^2-1} \right) = 1$, we lastly obtain
\begin{equation}
\label{validapern}
L_{n+1}(t(x))=\left(t -\sqrt{t^2-1} \right)^{2^{n}}+\left( t +\sqrt{t^2-1} \right)^{2^{n}}
\end{equation}
which concludes the proof.

It is observed that the (\ref{validapern}) is true for a generic function $t(x)$. If $n=1$, instead, the only function that satisfies the (\ref{validapern}) is $\displaystyle t(x) = \frac{x^{2}}{2}-1$.
\end{proof}

\begin{proposition}
\label{cor5bis}
The polynomials $L_n(x)$ are orthogonal with respect to the weight function $\frac{1}{4\sqrt{4-x^2}}$ defined on $x\in[-2,2]$.
\end{proposition}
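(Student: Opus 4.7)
The natural approach is to reduce the statement to the classical orthogonality of Chebyshev polynomials of the first kind on $[-1,1]$ with weight $1/\sqrt{1-t^2}$, exploiting the representation $L_n(x)=2\,T_{2^{n-1}}\!\left(\frac{x^{2}}{2}-1\right)$ established in the preceding proposition. The plan is therefore to perform the substitution $t=\tfrac{x^{2}}{2}-1$ inside the inner product $\int_{-2}^{2}\frac{L_m(x)L_n(x)}{4\sqrt{4-x^{2}}}\,dx$ and watch everything collapse onto the standard Chebyshev integral.

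First I would exploit the fact that every $L_n(x)$ is an even function of $x$ (immediate from the recursion and $L_1(x)=x^{2}-2$), which allows me to restrict the integral to $[0,2]$ and introduce a factor of $2$. On this half-interval the substitution $t=\tfrac{x^{2}}{2}-1$ is a smooth bijection onto $[-1,1]$ with $dt=x\,dx$. The key algebraic identity to verify is
\begin{equation*}
1-t^{2}=1-\left(\tfrac{x^{2}}{2}-1\right)^{2}=\tfrac{x^{2}(4-x^{2})}{4},
\end{equation*}
so that $\sqrt{1-t^{2}}=\tfrac{x\sqrt{4-x^{2}}}{2}$ for $x\in[0,2]$, and equivalently $\tfrac{dx}{\sqrt{4-x^{2}}}=\tfrac{dt}{2\sqrt{1-t^{2}}}$ after absorbing the Jacobian $dt=x\,dx$.

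Plugging the representation $L_k(x)=2\,T_{2^{k-1}}(t)$ into the integral and applying the change of variables then turns
\begin{equation*}
\int_{-2}^{2}\frac{L_m(x)L_n(x)}{4\sqrt{4-x^{2}}}\,dx
\end{equation*}
into a constant multiple of $\int_{-1}^{1}\frac{T_{2^{m-1}}(t)T_{2^{n-1}}(t)}{\sqrt{1-t^{2}}}\,dt$. Since the map $k\mapsto 2^{k-1}$ is injective on $\mathbb{N}$, the indices $2^{m-1}$ and $2^{n-1}$ are distinct whenever $m\ne n$, and the resulting integral vanishes by the classical orthogonality relation for $T_k$. This proves the claim.

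The only point that requires mild care is the change of variables: because $t$ is a $2$-to-$1$ function of $x$ on $[-2,2]$, one must first fold the integral onto $[0,2]$ using the evenness of $L_n$ before applying $t=\tfrac{x^{2}}{2}-1$, and then check that the spurious factor of $x$ from $dt=x\,dx$ exactly cancels the $x$ appearing in $\sqrt{1-t^{2}}=\tfrac{x\sqrt{4-x^{2}}}{2}$. Once this is observed, the proof is a one-line reduction to the Chebyshev case.
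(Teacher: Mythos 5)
Your proof is correct and follows essentially the same route as the paper: both invoke the identity $L_n(x)=2\,T_{2^{n-1}}\!\left(\frac{x^2}{2}-1\right)$, fold the integral onto $[0,2]$ by evenness, and substitute $t=\frac{x^2}{2}-1$ to land on the classical Chebyshev orthogonality relation. In fact you supply the details the paper compresses into ``putting $t=\frac{x^2}{2}-1$ and solving the integral'' — namely the Jacobian cancellation via $\sqrt{1-t^2}=\tfrac{x\sqrt{4-x^2}}{2}$ and the injectivity of $k\mapsto 2^{k-1}$ — so nothing further is needed.
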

\begin{proof}
Let us consider Chebyshev polynomials of the first kind; then:
$$\int_{-1}^{1}(1-x^2)^{-1/2}T_n(x) T_m(x) dx=0$$
if $m\neq n$ and $m,n\in\mathbb N$. Using this relationship, we must prove that:
\begin{equation}
\frac{1}{4}\int_{-2}^{2} \frac{1}{\sqrt{4-x^2}} L_n(x) L_m(x) dx=0 \ \ \ m\neq n
\end{equation}
or, by (\ref{eq:propcheby1}):
\begin{equation}
2\int_{-2}^{2} \frac{1}{\sqrt{4-x^2}} T_{2^{n-1}}\left(\frac{x^2}{2}-1\right) T_{2^{m-1}}\left(\frac{x^2}{2}-1\right)dx
\end{equation}
for $m\neq n$ and $m,n\in\mathbb N$. From symmetry of the integrand function, putting $t=\frac{x^2}{2}-1$ and solving the integral we obtain the thesis.
\end{proof}
\begin{corollary}
\label{cor6}
Let $x=2\cos\theta$, then the polynomials $L_{n}(x)$ admit the representation
\begin{equation}
\label{formulaconcos}
L_{n}(2\cos\theta)=2\cos\left(2^{n} \theta\right)
\end{equation}
\end{corollary}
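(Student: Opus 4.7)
The plan is to derive the corollary as an immediate application of Proposition (\ref{eq:propcheby1}), combined with the standard trigonometric characterization of the Chebyshev polynomials of the first kind, namely $T_m(\cos\phi)=\cos(m\phi)$ for $m\in\mathbb{N}$.

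First I would substitute $x=2\cos\theta$ into the identity $L_n(x)=2\,T_{2^{n-1}}\!\left(\frac{x^{2}}{2}-1\right)$. The argument of $T_{2^{n-1}}$ becomes $\frac{4\cos^{2}\theta}{2}-1=2\cos^{2}\theta-1$, which by the double-angle formula equals $\cos(2\theta)$. Hence $L_n(2\cos\theta)=2\,T_{2^{n-1}}(\cos 2\theta)$, and applying $T_m(\cos\phi)=\cos(m\phi)$ with $m=2^{n-1}$ and $\phi=2\theta$ gives $2\cos(2^{n-1}\cdot 2\theta)=2\cos(2^{n}\theta)$, as claimed.

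A self-contained alternative, bypassing Proposition (\ref{eq:propcheby1}) entirely, is a direct induction on $n$ based only on the defining recurrence (\ref{eq:Lucas_Lehmer}). The base case $n=1$ reduces to $L_1(2\cos\theta)=4\cos^{2}\theta-2=2\cos(2\theta)$ via the double-angle identity. Assuming $L_n(2\cos\theta)=2\cos(2^{n}\theta)$, the recurrence yields
\begin{equation*}
L_{n+1}(2\cos\theta)=\bigl[2\cos(2^{n}\theta)\bigr]^{2}-2=2\bigl(2\cos^{2}(2^{n}\theta)-1\bigr)=2\cos(2^{n+1}\theta),
\end{equation*}
which closes the induction.

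There is essentially no obstacle. The only point worth keeping in mind is that the substitution $x=2\cos\theta$ confines $\frac{x^{2}}{2}-1$ to $[-1,1]$, so the trigonometric form of $T_m$ is valid on the entire relevant range and no analytic continuation argument is needed. Conceptually, the corollary merely records the fact that the Lucas--Lehmer iteration $y\mapsto y^{2}-2$ is conjugate, under the change of variable $x=2\cos\theta$, to the angle-doubling map $\theta\mapsto 2\theta$, which makes the formula (\ref{formulaconcos}) the natural closed form for the $n$-th iterate.
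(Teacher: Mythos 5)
Your first argument is correct and is essentially the paper's proof: the paper substitutes $x=2\cos\theta$ into the explicit radical form (\ref{eq:propcheby1diversa}) of $2\,T_{2^{n-1}}\left(\frac{x^2}{2}-1\right)$, obtaining $(\cos 2\theta \mp \imath\sin 2\theta)^{2^{n-1}}$ summed in conjugate pairs, and concludes via Euler's identity --- which is exactly the identity $T_m(\cos\phi)=\cos(m\phi)$ that you invoke directly, with $\phi=2\theta$ and $m=2^{n-1}$. The only cosmetic difference is that the paper spells out the de Moivre computation while you cite the trigonometric characterization of $T_m$ as known. Your second, self-contained argument --- induction on the recurrence $L_{n+1}=L_n^2-2$ using only the double-angle identity $2(2\cos^2\alpha-1)=2\cos 2\alpha$ --- is a genuinely different and more elementary route: it bypasses Proposition (\ref{eq:propcheby1}) and all considerations of complex-valued radicals entirely, and in fact shows the formula already holds for $n=0$ and $n=1$. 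What the paper's route buys in exchange is that it keeps the Chebyshev connection front and center, which the subsequent results in Section \ref{sec:3} (the expressions (\ref{euler}), (\ref{theta}), (\ref{Ln_value})) continue to exploit; your conjugacy remark ($y\mapsto y^2-2$ conjugate to angle doubling) is the right conceptual summary of both.
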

\begin{proof}
Note that, in this case, $|x|\leq 2$. Therefore we need to work with radicals of negative numbers. Substituting $x=2\cos\theta$ in (\ref{eq:propcheby1diversa}) we have:
$$L_{n}(2\cos\theta)=\left(\cos2\theta-\imath \sin2\theta \right)^{2^{n-1}}+\left(\cos2\theta+\imath\sin2\theta\right)^{2^{n-1}}$$
which can be rewritten by applying \emph{Euler's identity}: $$\left(e^{-\imath 2\theta} \right)^{2^{n-1}}+\left(e^{+\imath 2\theta} \right)^{2^{n-1}}=2\cos\left(2^{n} \theta \right)$$
\end{proof}

We resume approximation (\ref{eq:theorem_relaz_fondam}) of $L_{n}(x)$ to prove that locally and for $|x_0| \leq 2$ the function $L_{n}(x)$ behaves like a cosine, while globally, in $[-2, 2]$, it oscillates with shorter and shorter periods in the neighborhoods of the endpoints, by means of the following theorem.
\begin{theorem}
\label{theo:altrarelazione_fondamentale}
{Let $x_0$ a generic maximum point of $L_{n}(x)$. For $n\geq 2$ we have
\begin{equation}
\label{eq:altrotheorem_relaz_fondam}
L_{n}(x)= 2\cos(2^{n-1} k (x-x_{0})) + o((x-x_{0})^{2})
\end{equation}
where $k$ is such that $|k|\geq 1$ and is increasing with $x_0$, for fixed $n$.}
\end{theorem}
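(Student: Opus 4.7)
The key tool will be Corollary \ref{cor6}, which gives the clean identity $L_n(2\cos\theta)=2\cos(2^n\theta)$. Since every maximum point of $L_n$ lies in $(-2,2)$, I can write $x_0=2\cos\theta_0$ for a unique $\theta_0\in(0,\pi)$, and by the even symmetry of $L_n$ restrict attention to $x_0\in[0,2)$, i.e.\ $\theta_0\in(0,\pi/2]$. The plan is to transfer the Taylor expansion of $\theta=\arccos(x/2)$ around $x_0$ through this identity and simplify it using the information we have about $\theta_0$ at a maximum.

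First I would invoke Remark \ref{oss:maxmin}, which asserts that every local maximum of $L_n$ equals $2$; hence $2\cos(2^n\theta_0)=2$ and therefore $2^n\theta_0\in 2\pi\mathbb{Z}$. Using $d\theta/dx=-1/(2\sin\theta)$ (well-defined because $\sin\theta_0\neq 0$), the first-order Taylor expansion yields
\begin{equation*}
\theta=\theta_0-\frac{x-x_0}{2\sin\theta_0}+O((x-x_0)^2).
\end{equation*}
Multiplying by $2^n$, substituting into $L_n(x)=2\cos(2^n\theta)$, and invoking both the $2\pi$-periodicity of cosine (to eliminate the $2^n\theta_0$ term) and its evenness (to absorb the minus sign), I obtain
\begin{equation*}
L_n(x)=2\cos\!\bigl(2^{n-1}k\,(x-x_0)\bigr)+o((x-x_0)^2),\qquad k:=\frac{1}{\sin\theta_0}.
\end{equation*}

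The bound $|k|\geq 1$ then follows at once from $|\sin\theta_0|\leq 1$. For monotonicity, on $x_0\in[0,2)$ the map $x_0\mapsto\theta_0=\arccos(x_0/2)$ strictly decreases from $\pi/2$ to $0$, so $\sin\theta_0$ strictly decreases from $1$ to $0$ and consequently $k=1/\sin\theta_0$ strictly increases with $x_0$, as claimed. The one non-routine technical point is verifying that the $O((x-x_0)^2)$ remainder inside the cosine contributes only $o((x-x_0)^2)$ outside; this follows from $\cos(a+\varepsilon)=\cos a-\varepsilon\sin a+O(\varepsilon^2)$ with $\varepsilon=O((x-x_0)^2)$, since the cross-term $\varepsilon\sin a$ is then $O((x-x_0)^3)$ thanks to $\sin a=O(x-x_0)$ near the maximum. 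This is the only step where care must be taken; everything else is a direct substitution into the Chebyshev form of $L_n$.
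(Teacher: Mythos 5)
Your proof is correct, but it takes a genuinely different route from the paper's. The paper proceeds in two stages: an induction on $n$ to establish the form of the expansion (squaring the cosine approximation at each step), followed by an explicit computation of $L_n''(x_0)$ via the decomposition $L_n=L_n^++L_n^-$ into the two complex-conjugate Chebyshev branches, which yields $L_n''(x_0)=\frac{2^{2n+1}}{x_0^2-4}$; equating this with the second Taylor coefficient of $2\cos(2^{n-1}k(x-x_0))$ gives $k=\pm\bigl(1-x_0^2/4\bigr)^{-1/2}$, from which $|k|\ge 1$ and the monotonicity follow. You instead work entirely through the trigonometric representation $L_n(2\cos\theta)=2\cos(2^n\theta)$ of Corollary \ref{cor6}, pull back the expansion via $\theta=\arccos(x/2)$, and read off $k=1/\sin\theta_0$ directly; since $\sin\theta_0=\sqrt{1-x_0^2/4}$, this agrees with the paper's value. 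Your route buys several things: it avoids both the induction and the somewhat delicate differentiation of the complex-valued branches $L_n^{\pm}$ near points where the radical $\sqrt{x^2-4}$ is imaginary; it pins down the sign of $k$ (the paper leaves $k=\pm\cdots$); and you are the only one to address explicitly why the $O((x-x_0)^2)$ error inside the cosine contributes only $o((x-x_0)^2)$ outside, which is a real gap-filling observation (the cross term is $O((x-x_0)^3)$ because $\sin a=O(x-x_0)$). The one hypothesis you rely on that should be made explicit is that $x_0$ is an interior point of $(-2,2)$ so that $\sin\theta_0\neq 0$ and $\arccos(x/2)$ is smooth at $x_0$; this is supplied by the paper's corollary that all critical points lie in $(-2,2)$, and your use of Remark \ref{oss:maxmin} to get $2^n\theta_0\in 2\pi\mathbb{Z}$ is exactly the right input. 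Like the paper, you verify monotonicity of $k$ only for $x_0\ge 0$, which is all that symmetry permits.
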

\begin{proof}
For $n=2$ it is sufficient recall Theorem \ref{theo:relazione_fondamentale}. In this case $k=1$. Let us now suppose the claim to be true for some natural $n$ and proceed by induction for $n+1$:
\begin{align}
\label{eq:induz_npiu1_altrotheorem_relaz_fondam}
L_{n+1}(x) = &L_{n}^{2}(x) -2 = [2\cos(2^{n-1}k(x-x_{0})) + o((x-x_{0})^{2})]^{2}-2 = \notag \\
&=\ 4\cos^{2}[2^{n-1}k(x-x_{0})]+o[(x-x_{0})^{4}]+ \notag \\
&+4\ \cos[2^{n-1}k(x-x_{0})]\ o[(x-x_{0})^{2}]-2
\end{align}
from which, by means of well known trigonometric formulas, we arrive to $L_{n+1}(x) = L_n^2 - 2 = 2\cos(2^{n}k(x-x_{0})) + o((x-x_{0})^{2})$ if $x\rightarrow x_{0}$. From Remark (\ref{oss:maxmin}), the point $x_0$ is a maximum point for $L_n(x)$ and $L_{n+1}(x)$. Now we aim to prove that $|k| \geq1$. The second-order Taylor expansion of the right hand side of (\ref{eq:altrotheorem_relaz_fondam}), centered in $x_0$, is
\begin{equation}
\label{eq:taylor_coseno_polo_xo}
2-2^{2(n-1)}k^{2}(x-x_{0})^{2}+ o((x-x_{0})^{2})
\end{equation}
For what concerns the left hand side of (\ref{eq:altrotheorem_relaz_fondam}), we observe that
$L_{n}(x_{0})=2$, being $x_{0}$ a maximum point. Let us observe that equation (\ref{eq:propcheby1diversa})
\begin{align}
\label{ln+ln-}
&L_{n}(x)=\Biggl(\frac{x^{2}}{2}-1+\sqrt{\Bigl(\frac{x^{2}}{2}-1\Bigr)^{2}-1}\ \Biggr)^{2^{n-1}}+ \notag \\
&+\Biggl(\frac{x^{2}}{2}-1-\sqrt{\Bigl(\frac{x^{2}}{2}-1\Bigr)^{2}-1}\ \Biggr)^{2^{n-1}}=L_{n}^{+}(x)+L_{n}^{-}(x)
\end{align}
must be understood with values in the complex field, because, due to
$$\underbrace{\sqrt{2+\sqrt{2+\sqrt{2+...+\sqrt{2}}}}}_{n}=2\ \cos\Bigl(\frac{\pi}{2^{n+1}}\Bigr) < 2$$
all the critical points have absolute value less or equal to 2. Then the derivative of $L_{n}(x)$ is
\begin{equation}
L_{n}'(x)=\frac{d}{dx}\left( L_{n}^{+}(x)+L_{n}^{-}(x) \right)
\end{equation}
with
$$\frac{d}{dx}L_{n}^{+}(x)=2^{n-1}\ \frac{x\ L_{n}^{+}(x)}{\sqrt{\left(\frac{x^{2}}{2}-1\right)^{2}-1}}$$
and
$$\frac{d}{dx}L_{n}^{-}(x)=-2^{n-1}\ \frac{x\ L_{n}^{-}(x)}{\sqrt{\left(\frac{x^{2}}{2}-1\right)^{2}-1}}$$
whence
\begin{equation}
 \ L_{n}'(x)=2^{n-1}\ \frac{x}{\sqrt{\left(\frac{x^{2}}{2}-1\right)^{2}-1}} \left[ L_{n}^{+}(x)-L_{n}^{-}(x) \right]
 =\frac{2^{n}}{\sqrt{x^{2}-4}}\left[ L_{n}^{+}(x)-L_{n}^{-}(x) \right]
\end{equation}
which must vanish when calculated in $x=x_{0}$, maximum point. For the sake of simplicity, let us consider only $x > 0$. The second order derivative is
\begin{equation}
L_{n}''(x)=\frac{2^{n}\left\{(x^{2}-4) \Bigl[\frac{d}{dx}L_{n}^{+}(x)- \frac{d}{dx}L_{n}^{-}(x)\Bigr] - x \left( L_{n}^{+}(x)-L_{n}^{-}(x) \right) \right\}}{(x^{2}-4)\sqrt{x^{2}-4}}
\end{equation}
which can be rewritten as
\begin{align}
&L_{n}''(x)=2^{n}\left[ \frac{\frac{d}{dx}\left(L_{n}^{+}(x)-L_{n}^{-}(x)\right)}{\sqrt{x^{2}-4}} - \frac{x L_{n}'(x)}{2^{n}(x^{2}-4)} \right]= \notag \\
&=2^{n}\left[ \frac{2^{n}\left(L_{n}^{+}(x)+L_{n}^{-}(x)\right)}{(\sqrt{x^{2}-4})^{2}} - \frac{x L_{n}'(x)}{2^{n}(x^{2}-4)} \right] \ .
\end{align}
\noindent
We calculate it in $x=x_0$:
\begin{equation}
L_{n}''(x_{0})=2^{n}\left[ \frac{2^{n}L_{n}(x_{0})}{x_{0}^{2}-4} - \frac{x_{0} L_{n}'(x_{0})}{2^{n}(x_{0}^{2}-4)} \right]= \frac{2^{2n}L_{n}(x_{0})}{x_{0}^{2}-4}= \frac{2^{2n+1}}{x_{0}^{2}-4}
\end{equation}
since $L_{n}'(x_{0})=0$ and $L_{n}(x_{0})=2$. Thus we have the Taylor expansion
\begin{equation}
L_{n}(x)=2+\frac{2^{2n}}{x_{0}^{2}-4} (x-x_{0})^{2}+o((x-x_{0})^{2}) \ .
\end{equation}
Equating it to (\ref{eq:taylor_coseno_polo_xo}) gives
\begin{equation}
\frac{2^{2n}}{x_{0}^{2}-4}=-2^{2(n-1)}k^{2} \ \Rightarrow \ \frac{4}{4-x_{0}^{2}}=k^{2} \ \Rightarrow \ k=\pm \frac{1}{\sqrt{1-x_{0}^{2}/4}}
\end{equation}
It is easy to verify that $k$ is such that $|k|\geq 1$ and increasing with $x_0>0$.
\end{proof}

As those of the first kind, the \emph{Chebyshev polynomial of the second kind} are defined by a recurrence relation ~\cite{8:8, 12:12, 13:13}:
\begin{displaymath}
\begin{cases}
U_{0}(x)=1, \ \ U_{1}(x)=2x \\
U_{n}(x)=2xU_{n-1}(x)-U_{n-2}(x) \quad \forall n \geq 2\\
\end{cases}
\end{displaymath}
which is satisfied by
\begin{equation}
\label{eq:Usemplice}
U_{n}(x)=\sum_{k=0}^n (x + \sqrt{x^2 -1})^k (x - \sqrt{x^2-1})^{n-k} \quad \forall x \in [-1, 1] \ .
\end{equation}

This relation is equivalent to
\begin{equation}
\label{eq:Ufrazione}
U_{n}(x)=\frac{\left(x +\sqrt{x^2-1} \right)^{n+1}-\left( x -\sqrt{x^2-1} \right)^{n+1}}{2\sqrt{x^2-1}}
\end{equation}
where the radicals assume real values for each $x \in (-1, 1)$. From continuity of function (\ref{eq:Usemplice}), we observe that (\ref{eq:Ufrazione}) can be extended by continuity in $x = \pm 1$, too. It can therefore be put $U_n(\pm 1) = (\pm 1)^n (n+1)$ in (\ref{eq:Ufrazione}).

\begin{proposition}
\label{con_sommatoria}
For each $n\geq1$ we have
\begin{equation}
\label{eq:propcheby2}
\prod_{i=1}^{n}L_{i}(x)=U_{2^{n}-1}\left(\frac{x^{2}}{2}-1\right)
\end{equation}
\end{proposition}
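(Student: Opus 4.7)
The plan is to exploit the derivative formula from Lemma \ref{propo:prima}, applied to $L_{n+1}$, together with the Chebyshev representation \eqref{eq:propcheby1}. Setting $t(x) = x^2/2 - 1$ (so that $t'(x) = x$), formula \eqref{eq:propcheby1} reads $L_{n+1}(x) = 2\, T_{2^n}(t(x))$. Using the chain rule together with the classical identity $T_m'(y) = m\, U_{m-1}(y)$, this gives
\[
L_{n+1}'(x) \;=\; 2\cdot 2^n\, U_{2^n - 1}(t(x)) \cdot x \;=\; 2^{n+1}\, x\, U_{2^n - 1}\!\left(\tfrac{x^2}{2}-1\right).
\]
On the other hand, Lemma \ref{propo:prima} applied to $L_{n+1}$ (the hypothesis $n+1 \geq 2$ holding for every $n \geq 1$) yields
\[
L_{n+1}'(x) \;=\; 2^{n+1}\, x \, \prod_{i=1}^{n} L_i(x).
\]
Equating the two and cancelling the non-zero polynomial factor $2^{n+1} x$ then gives exactly \eqref{eq:propcheby2} for every $n \geq 1$.

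An equivalent induction-based route would begin from the base case $n=1$ (where $L_1(x) = x^2 - 2 = 2\,t(x) = U_1(t(x))$) and, in the inductive step, multiply the hypothesis $\prod_{i=1}^{n-1} L_i(x) = U_{2^{n-1}-1}(t(x))$ by $L_n(x) = 2\, T_{2^{n-1}}(t(x))$, then invoke the Chebyshev product identity $2\, T_m(y)\, U_{m-1}(y) = U_{2m-1}(y)$ (immediate from $2\cos(m\theta)\sin(m\theta) = \sin(2m\theta)$) with $m = 2^{n-1}$.

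No serious obstacle is anticipated. Both the derivative rule $T_m' = m\, U_{m-1}$ and the product-to-sum identity for $T_m\, U_{m-1}$ are classical, and the matching of the indices $2^{n-1}$ and $2^n-1$ falls into place precisely because the Lucas--Lehmer iteration $L_n = L_{n-1}^{2} - 2$ doubles the underlying Chebyshev degree at each step. The only mild care is ensuring the two displayed expressions for $L_{n+1}'(x)$ are compared as polynomial identities in $x$, which legitimises the cancellation of the common factor.
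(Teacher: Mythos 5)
Your proposal is correct, and your primary argument is genuinely different from the paper's. The paper proves \eqref{eq:propcheby2} by induction: the base case is $L_1(x)=U_1\bigl(\tfrac{x^2}{2}-1\bigr)$, and the inductive step establishes $U_{2^{n+1}-1}(t)=U_{2^{n}-1}(t)\,L_{n+1}(x)$ by writing $U$ through the explicit sum \eqref{eq:Usemplice} and splitting and reindexing the resulting sums. Your first route instead differentiates the already-proven identity $L_{n+1}(x)=2\,T_{2^{n}}\bigl(\tfrac{x^2}{2}-1\bigr)$ via the chain rule and the classical fact $T_m'=m\,U_{m-1}$, and compares the result with the product formula for $L_{n+1}'$ in Lemma \ref{propo:prima}; cancelling the common polynomial factor $2^{n+1}x$ (legitimate in the polynomial ring) yields the claim with no fresh induction, since the inductive work is already absorbed into Lemma \ref{propo:prima} and \eqref{eq:propcheby1}. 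This is shorter and conceptually clarifying --- it exhibits the $U$-identity as literally the derivative of the $T$-identity --- at the cost of importing one standard external fact about Chebyshev polynomials. Your second, induction-based route is essentially the paper's proof in disguise: the identity $2\,T_m(y)\,U_{m-1}(y)=U_{2m-1}(y)$ that you invoke is exactly what the paper's $A+B$ sum manipulation re-derives by hand for $m=2^{n}$, so there you trade the paper's self-contained computation for a citation of a classical product formula. Both routes are sound; no gaps.
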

\begin{proof}
Also in this case, the formulas are defined on complex numbers. By (\ref{eq:Ufrazione}) we must demonstrate that:
\begin{align}
\label{eq:propcheby2diversa}
&\prod_{i=1}^{n}L_{i}(x)=\frac{\left(\frac{x^{2}}{2}-1 +\sqrt{\left(\frac{x^{2}}{2}-1\right)^2-1} \right)^{2^{n}}}{2\sqrt{\left(\frac{x^{2}}{2}-1\right)^2-1}}- \notag \\
&+\frac{\left( \frac{x^{2}}{2}-1 -\sqrt{\left(\frac{x^{2}}{2}-1\right)^2-1} \right)^{2^{n}}}{2\sqrt{\left(\frac{x^{2}}{2}-1\right)^2-1}} \ .
\end{align}

We proceed by induction on $n$. First of all, let us observe that when $n=1$ we have $L_1(x) = x^2 - 2 = U_1\left(\frac{x^{2}}{2}-1\right)$.

For the inductive step, let $n>1$ be an integer, and assume that the proposition holds for $n$;
by multiplying both sides of (\ref{eq:propcheby2}) by $L_{n+1}(x)$ we obtain:
\begin{equation}
\prod_{i=1}^{n+1}L_{i}(x)=U_{2^{n}-1}\left(\frac{x^{2}}{2}-1\right)\ L_{n+1}(x)
\end{equation}
Thus, the proposition holds for $n+1$ if
\begin{equation}
U_{2^{n+1}-1}\left(\frac{x^{2}}{2}-1\right)=U_{2^{n}-1}\left(\frac{x^{2}}{2}-1\right)\ L_{n+1}(x)
\end{equation}
Let's focus on the right hand side, setting $\displaystyle t = \frac{x^2}{2}-1$:
\begin{align}
&= \underbrace{\sum_{k=0}^{2^{n}-1} (t + \sqrt{t^2 -1})^{k+2^n} (t - \sqrt{t^2-1})^{2^{n}-1-k}}_B+ \notag \\
&+\underbrace{\sum_{k=0}^{2^{n}-1} (t + \sqrt{t^2 -1})^k (t - \sqrt{t^2-1})^{2^{n+1}-1-k}}_A
\end{align}
where
\begin{align}
&A=\sum_{k=0}^{2^{n+1}-1} (t + \sqrt{t^2 -1})^k (t - \sqrt{t^2-1})^{2^{n+1}-1-k}+ \notag \\
&-\sum_{k=2^n}^{2^{n+1}-1} (t + \sqrt{t^2 -1})^k (t - \sqrt{t^2-1})^{2^{n+1}-1-k} \notag \\
&B=\sum_{k=0}^{2^{n}-1} (t + \sqrt{t^2 -1})^{k+2^n} (t - \sqrt{t^2-1})^{2^{n}-1-k} \notag \\
&=\sum_{j=2^n}^{2^{n+1}-1} (t + \sqrt{t^2 -1})^j (t - \sqrt{t^2-1})^{2^{n+1}-1-j}
\end{align}
therefore $A+B$ is just equal to $U_{2^{n+1}-1}\left(t\right)$,  and this completes the proof.
\end{proof}
After having calculated $L_n(2\cos\theta)$ in (\ref{formulaconcos}), now let us calculate $U_{2^{n}-1}(x^2/2 -1)$ for $x=2\cos\theta$ by (\ref{eq:propcheby2diversa}).
$$\prod_{i=1}^{n}L_{i}(2\cos\theta)=\frac{\left(\cos2\theta+\imath\sin2\theta \right)^{2^{n}}-\left( \cos2\theta - \imath\sin2\theta \right)^{2^{n}}}{2\imath\sin2\theta}$$
from which and \emph{Euler identity}, we get
\begin{equation}
\label{euler}
\prod_{i=1}^{n}L_{i}(2\cos\theta)= \frac{\left(e^{+\imath 2\theta} \right)^{2^{n}}-\left(e^{-\imath 2\theta} \right)^{2^{n}}}{2\imath\sin2\theta}=\frac{\sin\left(2^{n+1} \theta \right)}{\sin2\theta} \ .
\end{equation}

For $|x| \leq 2$ we can show another formula for $L_n$. Let us come back to (\ref{eq:propcheby1diversa}):
\begin{align}
&L_{n}(x)=\left(\frac{x^{2}}{2}-1 -\sqrt{\left(\frac{x^{2}}{2}-1\right)^2-1} \right)^{2^{n-1}}+ \notag \\
&+\left( \frac{x^{2}}{2}-1 +\sqrt{\left(\frac{x^{2}}{2}-1\right)^2-1} \right)^{2^{n-1}}
\end{align}

In this case $|x| \leq 2$; we change the sign inside the radical, factorizing out the imaginary unit:
\begin{align}
&L_{n}(x)=\left(\frac{x^{2}}{2}-1 -\imath\sqrt{1-\left(\frac{x^{2}}{2}-1\right)^2} \right)^{2^{n-1}}+ \notag \\
&+\left( \frac{x^{2}}{2}-1 +\imath\sqrt{1-\left(\frac{x^{2}}{2}-1\right)^2} \right)^{2^{n-1}}
\end{align}
We then calculate the powers of two complex conjugate numbers $L_n^+$ and $L_n^-$, depending on the variable $x$. With the notation introduced in (\ref{ln+ln-}), the absolute value of both complex numbers is unitary, since
\begin{equation}
|L_n^+| = |L_n^-| = \sqrt{\left(\frac{x^{2}}{2}-1\right)^2 + 1-\left(\frac{x^{2}}{2}-1\right)^2} = 1 \ .
\end{equation}

Moreover, since $L_1(\pm \sqrt{2}) = 0 \ ; \ L_2(\pm \sqrt{2}) = - 2 \ ; \  L_n(\pm \sqrt{2}) = 2 \quad \forall n \ge 3 \ $, then the argument of $L_n(\pm \sqrt{2})$ is $0$ for every $n \ge 3$. In the other cases, since, when $|x| \leq 2$, we can write $x = 2 \cos(\vartheta)$, thus $\displaystyle \frac{x^2}{2} - 1 = \cos(2 \vartheta)$; thus for $|x| \neq \sqrt{2}$ we can also put
\begin{equation}
\label{theta}
\vartheta(x)= \frac{1}{2}\arctan\left[\frac{\sqrt{1-\left(\frac{x^{2}}{2}-1\right)^2}}{\frac{x^{2}}{2}-1} \right]+b\pi
\end{equation}
where $b$ is a binary digit; thus, using (\ref{formulaconcos}), we obtain $L_{n}(x)=2\cos\left(2^{n}\vartheta(x)\right)$.

By setting further
\begin{equation}
\label{theta_value}
\theta(x)= \frac{1}{2}\arctan\left[\frac{\sqrt{1-\left(\frac{x^{2}}{2}-1\right)^2}}{\frac{x^{2}}{2}-1} \right]
\end{equation}
we can write:
\begin{equation}
\label{Ln_value}
L_{n}(x)=2\cos\left(2^{n}\theta(x)+2^{n}b\pi\right)=2\cos\left(2^{n}\theta(x)\right) \ .
\end{equation}
On the other hand, for very large $|x|$, considering the iterative structure of the map $L_n$, we deduce immediately the asymptotic formula $L_{n}(x) \sim (x^2-2)^{2^{n-1}}$.

\section{$M^a_{n}=2a \left(M^a_{n-1}\right)^{2}-\frac{1}{a}$ map.}
\label{sec:4}

The considerations made in the previous sections on the map $L_n$ can be extended to an entire class of maps, obtained through the iterated formula $M^a_{n}=2a \left(M^a_{n-1}\right)^{2}-\frac{1}{a} \ , \ a >0$, with $M^a_0(x) = x$. It follows that

\begin{equation}
\displaystyle M^a_0(x) = x \quad ; \quad M^a_1(x) = 2a x^2 - \frac{1}{a} \quad ; \quad M^a_2(x) = 8a^3x^4 - 8ax^2 + \frac{1}{a} \quad ...
\end{equation}

Note that the map $L_n$ is a particular case of $M^a_n$, obtained by setting $a=1/2$. We briefly show that the map $M^a_n$ satisfies similar properties as those proven for $L_n$.

\begin{proposition}
For $n\geq2$ we have
\begin{equation}
\label{eq:prop1}
M^a_{n}(x)= \frac{1}{a} \cdot \cos(a\ 2^{n}x)+o(x^{2})
\end{equation}
\end{proposition}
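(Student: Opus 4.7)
The plan is to mimic the proof of Theorem \ref{theo:relazione_fondamentale} almost verbatim, since the recursion $M^a_n = 2a(M^a_{n-1})^2 - \frac{1}{a}$ is designed so that the quadratic behavior at the origin tracks $\frac{1}{a}\cos(a \cdot 2^n x)$. First I would replace the cosine by its second-order McLaurin expansion, reducing the claim to the equivalent statement
\begin{equation*}
M^a_n(x) = \frac{1}{a} - \frac{a \cdot 4^n}{2}\, x^2 + o(x^2),
\end{equation*}
and then proceed by induction on $n \geq 2$. For the base case $n = 2$ I would use the explicit formula $M^a_2(x) = 8a^3 x^4 - 8a x^2 + \frac{1}{a}$ displayed right after the definition of $M^a_n$, whose quadratic coefficient $-8a$ is precisely $-\frac{a \cdot 4^2}{2}$, matching the McLaurin expansion of $\frac{1}{a}\cos(4ax)$.

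For the inductive step, assuming the expansion at level $n$, I would substitute into the recursion to get
\begin{align*}
M^a_{n+1}(x) &= 2a\left(\frac{1}{a} - \frac{a \cdot 4^n}{2}\, x^2 + o(x^2)\right)^{\!2} - \frac{1}{a}.
\end{align*}
Expanding the square, the cross term contributes $-2 \cdot \frac{1}{a} \cdot \frac{a \cdot 4^n}{2}\, x^2 = -4^n\, x^2$, while the square of the $x^2$-term and all products with $o(x^2)$ are absorbed into $o(x^2)$. Multiplying by $2a$ and subtracting $\frac{1}{a}$ then yields $\frac{1}{a} - 2a \cdot 4^n\, x^2 + o(x^2)$, and since $2a \cdot 4^n = \frac{a \cdot 4^{n+1}}{2}$, this is exactly the target expansion at level $n+1$.

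The computation is routine bookkeeping and I do not expect any genuine obstacle; the only mildly delicate point is controlling the error terms so that $o(x^2) \cdot O(x^2)$ and $(o(x^2))^2$ stay inside $o(x^2)$. As a sanity check, the statement reduces to Theorem \ref{theo:relazione_fondamentale} upon setting $a = \tfrac{1}{2}$, since then $\frac{1}{a}\cos(a \cdot 2^n x) = 2\cos(2^{n-1}x)$, which matches the right-hand side of \eqref{eq:theorem_relaz_fondam}.
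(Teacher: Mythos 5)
Your proposal is correct and follows essentially the same route as the paper: reduce the claim to the quadratic McLaurin expansion $M^a_n(x)=\frac{1}{a}-a\,2^{2n-1}x^2+o(x^2)$ (your coefficient $\frac{a\cdot 4^n}{2}$ is the same quantity), verify $n=2$ from the explicit formula for $M^a_2$, and induct via the recursion, with the cross term carrying the coefficient. The bookkeeping and the $a=\tfrac12$ consistency check are both fine.
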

\begin{proof}
We must show that:
\begin{equation}
M^a_{n}(x)= \frac{1}{a} - a 2^{2n-1}x^{2}+o(x^{2})
\end{equation}
where we take into account the McLaurin polynomial of cosine. We proceed by induction.
For $n=2$:
\begin{equation}
M^a_{2}(x)= 2a \left(2a x^{2}-\frac{1}{a}\right)^{2}-\frac{1}{a}= \frac{1}{a} - 8ax^{2}+o(x^{2})
\end{equation}
Let us consider the second order McLaurin polynomial of $\frac{1}{a} \cdot \cos(4ax)$: it is just $\frac{1}{a} - 8ax^{2}+o(x^{2})$, thus verifying the relation for $n=2$. Let us now assume (\ref{eq:prop1}) is true for a generic $n$, and deduce that it is also true for $n+1$:
\begin{align}
&M^a_{n+1}=2a \left(M^a_{n}\right)^{2}-\frac{1}{a}=2a\left[\frac{1}{a} - a 2^{2n-1}x^{2}+o(x^{2}) \right]^{2}-\frac{1}{a}=\notag \\
&=\frac{1}{a} - a 2^{2n+1}x^{2}+o(x^{2})
\end{align}
which is in fact the McLaurin polynomial of $\frac{1}{a} \cdot \cos(a\ 2^{n+1}x)$.
\end{proof}

\begin{proposition}
At each iteration the zeros of the map $M^a_n (n \geq 1)$ have the form
\begin{equation}
\label{eq:prop2}
\pm \frac{1}{2a}\cdot \sqrt{2\pm\sqrt{2\pm\sqrt{2\pm\sqrt{2\pm...\pm\sqrt{2}}}}}
\end{equation}
\end{proposition}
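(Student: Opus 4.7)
The plan is to reduce the statement to the analogous result for $L_n$ (the first Proposition of Section~\ref{sec:2}) via an affine change of variable. The guiding observation is that the specialisation $a = 1/2$ in the recursion $M^a_n = 2a(M^a_{n-1})^2 - 1/a$ recovers $L_n = L_{n-1}^2 - 2$ exactly, so one expects $M^a_n$ to be related to $L_n$ by a simple rescaling of both the input and the output by appropriate powers of $a$.

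First, I would search for constants $c,\lambda$ (depending on $a$) so that the ansatz $M^a_n(x) = c\, L_n(\lambda x)$ is consistent with both the recursion and the initial condition. The requirement $M^a_0(x) = x$ forces $c\lambda = 1$, and matching the two recursions forces $c = 1/(2a)$ together with $1/(ca) = 2$; both constraints are simultaneously satisfied by $c = 1/(2a)$ and $\lambda = 2a$.

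Second, I would confirm by a one-line induction on $n$ the identity
\[
M^a_n(x) = \frac{1}{2a}\, L_n(2ax),
\]
the base case $n=0$ being immediate from $L_0(y)=y$, and the inductive step following by direct substitution of the inductive hypothesis into the defining recursion of $M^a_n$.

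Third, since $a>0$ the affine map $x \mapsto 2ax$ is a bijection of $\mathbb{R}$, so the zeros of $M^a_n$ correspond one-to-one to those of $L_n$ via $y = 2ax$. Invoking the known description of the zeros of $L_n$ as $\pm\sqrt{2\pm\sqrt{2\pm\cdots\pm\sqrt{2}}}$ with $n$ nested radicals, and dividing each such value by $2a$, yields precisely the form (\ref{eq:prop2}). I do not anticipate a real obstacle: the work is essentially the guess of the correct substitution, and the only point requiring mild care is to record that the positivity $a>0$ makes the change of variable a bijection, so that no zeros are created or lost.
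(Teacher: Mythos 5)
Your proof is correct, but it follows a genuinely different route from the paper's. The paper argues by direct induction on the nested radical itself: since $M^a_{n+1}=M^a_n\circ M^a_1$, the zeros of $M^a_{n+1}$ are the solutions of $2ax^2-\tfrac1a=\pm\tfrac{1}{2a}\sqrt{2\pm\cdots}$ (the right side being a zero of $M^a_n$ by the inductive hypothesis), and solving for $x$ and absorbing the constants $\tfrac{1}{2a^2}=\tfrac{2}{4a^2}$ and $\tfrac{1}{4a^2}$ under the outer root reproduces the same form with one more radical. You instead prove the conjugation identity $M^a_n(x)=\tfrac{1}{2a}L_n(2ax)$ and transfer the known description of the zeros of $L_n$ through the bijection $x\mapsto 2ax$ (where $a>0$ matters, as you note). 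Your identity is correct --- the base case and the computation $2a\cdot\tfrac{1}{4a^2}L_n(2ax)^2-\tfrac1a=\tfrac{1}{2a}\bigl(L_n(2ax)^2-2\bigr)$ check out --- and it is arguably the more illuminating argument: it shows at a stroke that every statement of Section \ref{sec:4} is a rescaled version of the corresponding statement for $L_n$, something the paper never makes explicit (it only observes that $a=1/2$ recovers $L_n$). The trade-off is that your proof leans on the first Proposition of Section \ref{sec:2}, whose proof the paper omits ``for brevity''; the paper's induction for $M^a_n$ is self-contained and, specialized to $a=1/2$, effectively supplies that omitted argument, whereas yours presupposes it.
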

\begin{proof}
It is obvious that at $n=1$ this statement is valid. Now assume that the (\ref{eq:prop2}) is valid for $n$. We have to prove that it is valid for $n+1$:
\begin{equation}
\label{eq:dim_prop2_next}
x^{2}=\frac{1}{2a^{2}} \pm \frac{1}{4a^{2}}\cdot \sqrt{2\pm\sqrt{2\pm\sqrt{2\pm\sqrt{2\pm...\pm\sqrt{2}}}}}
\end{equation}
and placing under the radical sign
\begin{equation}
\label{eq:dim_prop2_next_next}
x=\pm \sqrt{\frac{1}{2a^{2}} \pm \frac{1}{4a^{2}}\cdot \sqrt{2\pm\sqrt{2\pm\sqrt{2\pm\sqrt{2\pm...\pm\sqrt{2}}}}}}
\end{equation}
the thesis is obtained.
\end{proof}

\begin{proposition}
For each $n\geq1$ we have
\begin{equation}
\label{eq:propchebyM1}
M^a_{n}(x)=\frac{1}{a}\ T_{2^{n-1}}\left(2a^{2}x^{2}-1\right)
\end{equation}
\end{proposition}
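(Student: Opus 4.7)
The plan is to carry out an induction on $n$ that closely parallels the proof of Proposition (\ref{eq:propcheby1}). Observe first that the present proposition reduces to that earlier one when $a=1/2$, since then $\frac{1}{a}=2$ and $2a^{2}x^{2}-1=\frac{x^{2}}{2}-1$; so I expect essentially the same proof to work here with the bookkeeping of the constant $a$ added.

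For the base case $n=1$, I would just compute directly: since $T_{1}(y)=y$, we have $\frac{1}{a}\,T_{1}(2a^{2}x^{2}-1)=\frac{1}{a}(2a^{2}x^{2}-1)=2ax^{2}-\frac{1}{a}=M^{a}_{1}(x)$.

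For the inductive step, assume the formula for $n$, set $t=t(x)=2a^{2}x^{2}-1$, and substitute into the recurrence $M^{a}_{n+1}=2a(M^{a}_{n})^{2}-\tfrac{1}{a}$ to obtain
$$M^{a}_{n+1}(x)=\frac{2}{a}\,T_{2^{n-1}}(t)^{2}-\frac{1}{a}=\frac{1}{a}\bigl[2T_{2^{n-1}}(t)^{2}-1\bigr].$$
I would then write $T_{2^{n-1}}(t)=\tfrac{1}{2}\bigl[(t-\sqrt{t^{2}-1})^{2^{n-1}}+(t+\sqrt{t^{2}-1})^{2^{n-1}}\bigr]$, square, and use the same algebraic observation that drove the earlier proof, namely $(t-\sqrt{t^{2}-1})(t+\sqrt{t^{2}-1})=1$. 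The cross term in the square becomes the constant $2$, which is absorbed by the $-1$, leaving $(t-\sqrt{t^{2}-1})^{2^{n}}+(t+\sqrt{t^{2}-1})^{2^{n}}$ inside the brackets. This is exactly $2T_{2^{n}}(t)$, so the expression collapses to $\frac{1}{a}\,T_{2^{n}}(2a^{2}x^{2}-1)$, closing the induction.

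There is no substantive obstacle: the only nontrivial ingredient is the Chebyshev doubling identity $2T_{m}^{2}-1=T_{2m}$, which is immediate from the closed-form expression (or equivalently from $\cos(2m\theta)=2\cos^{2}(m\theta)-1$). The coefficients balance correctly because the recurrence scales as $2a\cdot(1/a)^{2}=2/a$, which matches the factor $1/a$ that one wants in front of the Chebyshev polynomial on the next level. Since Proposition (\ref{eq:propcheby1}) is already proved for the case $a=1/2$, one could alternatively just quote the doubling identity abstractly without rewriting the radical form; either route yields the same one-line inductive step.
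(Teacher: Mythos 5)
Your proof is correct and follows essentially the same route as the paper's: induction on $n$, with the base case checked directly and the inductive step driven by the recurrence $M^a_{n+1}=2a(M^a_n)^2-\frac{1}{a}$ together with the closed radical form of $T_{2^{n-1}}$ and the cancellation coming from $(t-\sqrt{t^2-1})(t+\sqrt{t^2-1})=1$. Your explicit appeal to the doubling identity $2T_m^2-1=T_{2m}$ is just a cleaner packaging of the very computation the paper carries out, so there is nothing substantive to add.
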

\begin{proof}
We must show that
\begin{align}
\label{eq:propchebyM1diversa}
&M^a_{n}(x)=\frac{\left(2a^{2}x^{2}-1 -\sqrt{\left(2a^{2}x^{2}-1\right)^2-1} \right)^{2^{n-1}}}{2a}+ \notag \\
&+\frac{\left( 2a^{2}x^{2}-1 +\sqrt{\left(2a^{2}x^{2}-1\right)^2-1} \right)^{2^{n-1}}}{2a}
\end{align}
This is verified for $n=1$:
\begin{equation}
M^a_{1}(t(x))=\frac{t+\sqrt{t^{2}-1}+t-\sqrt{t^{2}-1}}{2a}=\frac{2t}{2a}=2a x^{2}-\frac{1}{a}
\end{equation}
where $t=2a^{2}x^{2}-1$. By assumption, we suppose ($\ref{eq:propchebyM1}$) true for $n$ and by $M^a_{n+1}(t(x))=2a(M^a_{n})^{2}(t(x))-\frac{1}{a}$; we get finally the thesis for $n+1$:
\begin{equation}
\label{eq:Mnpiu1}
M^a_{n+1}(t(x))=\frac{\left(t -\sqrt{t^2-1} \right)^{2^{n}}}{2a}+\frac{\left( t +\sqrt{t^2-1} \right)^{2^{n}}}{2a}
\end{equation}
\end{proof}

\begin{remark}
For $\displaystyle |x| \leq \frac{1}{a}$, substituting $x=\frac{1}{a}\cos\theta$ in (\ref{eq:propchebyM1diversa}) we obtain:
\begin{equation}
M^a_{n}\left(\frac{1}{a}\cos\theta\right)=\frac{1}{a}\cos\left(2^{n}\theta\right) \ .
\end{equation}
\end{remark}

\begin{proposition}
For each $n\geq1$ we have
\begin{equation}
\label{eq:propchebyM2}
\prod_{i=1}^{n}M^a_{i}(x)=\left(\frac{1}{2a}\right)^{n}U_{2^{n}-1}\left(2a^{2}x^{2}-1\right)
\end{equation}
\end{proposition}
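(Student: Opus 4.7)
The plan is to mirror the strategy of Proposition~\ref{con_sommatoria} and argue by induction on $n$, using as the essential ingredient the identity (\ref{eq:propchebyM1}) that was just established for $M^a_{n}$ in terms of $T_{2^{n-1}}$.

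For the base case $n=1$, I would simply substitute $U_1(y)=2y$ with $y=2a^{2}x^{2}-1$ into the right-hand side of (\ref{eq:propchebyM2}), obtaining $\tfrac{1}{2a}\cdot 2(2a^{2}x^{2}-1)=2ax^{2}-\tfrac{1}{a}=M^a_{1}(x)$, which matches the left-hand side.

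For the inductive step, assume the formula holds for $n$. Multiplying both sides by $M^a_{n+1}(x)$ and invoking (\ref{eq:propchebyM1}) to rewrite $M^a_{n+1}(x)=\tfrac{1}{a}T_{2^{n}}(t)$ with $t=2a^{2}x^{2}-1$, I collect the scalar factors as $\bigl(\tfrac{1}{2a}\bigr)^{n}\cdot\tfrac{1}{a}=2\bigl(\tfrac{1}{2a}\bigr)^{n+1}$, so the claim reduces to the purely Chebyshev identity
\[
2\,T_{2^{n}}(t)\,U_{2^{n}-1}(t)\;=\;U_{2^{n+1}-1}(t).
\]
This is the diagonal case $m=k=2^{n}$ of the standard product formula $2T_{m}(y)U_{k-1}(y)=U_{m+k-1}(y)+U_{k-m-1}(y)$ with the convention $U_{-1}\equiv 0$; equivalently, it is immediate from the trigonometric evaluation $2\cos(k\theta)\cdot\tfrac{\sin(k\theta)}{\sin\theta}=\tfrac{\sin(2k\theta)}{\sin\theta}$ upon setting $y=\cos\theta$.

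The main obstacle — really the only non-routine point — is justifying this Chebyshev product identity in the present context. If one prefers not to cite it as known, the cleanest alternative is to repeat verbatim the bookkeeping used in Proposition~\ref{con_sommatoria}: expand $T_{2^{n}}(t)$ via its two-term closed form and $U_{2^{n}-1}(t)$ via the sum representation (\ref{eq:Usemplice}), exploit $(t+\sqrt{t^{2}-1})(t-\sqrt{t^{2}-1})=1$ to collapse the cross-terms, and split the resulting sum into the blocks indexed by $k$ and $k+2^{n}$, which reassemble into $U_{2^{n+1}-1}(t)$ exactly as the $A+B$ decomposition did there. Either route completes the induction.
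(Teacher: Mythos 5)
Your proposal is correct and follows essentially the same route as the paper: induction on $n$, the same base case via $U_1$, and an inductive step that multiplies by $M^a_{n+1}$ and reduces to $2\,T_{2^{n}}(t)\,U_{2^{n}-1}(t)=U_{2^{n+1}-1}(t)$. The paper simply carries out that last step directly as a difference of squares, writing $M^a_{n+1}$ in its closed form $\frac{1}{2a}\bigl[(t+\sqrt{t^2-1})^{2^n}+(t-\sqrt{t^2-1})^{2^n}\bigr]$ and multiplying against the fractional form (\ref{eq:Ufrazione}) of $U_{2^n-1}$, rather than invoking the named Chebyshev product identity.
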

\begin{proof}
We must first show that the formula is true for $n=1$:
\begin{equation}
\frac{1}{2a}\ U_{1}(t)=\frac{\left(t+\sqrt{t^{2}-1}\right)^{2}-\left(t-\sqrt{t^{2}-1}\right)^{2}}{4a\sqrt{t^{2}-1}}=\frac{t}{a}
\end{equation}
which is true because $\frac{t}{a}=2a x^{2}-\frac{1}{a}=M^a_{1}(x)$. We assume, then, that formula (\ref{eq:propchebyM2}) is true for $n$. We must now show that it is also true for $n+1$. To this aim, let us multiply both sides of (\ref{eq:propchebyM2}) by $M_{n+1}(t)$, expressed in (\ref{eq:Mnpiu1}); the right-hand side becomes
\begin{align}
&\left(\frac{1}{2a}\right)^{n+1}\ \frac{\left(t +\sqrt{t^2-1} \right)^{2^{n+1}}-\left( t -\sqrt{t^2-1} \right)^{2^{n+1}}}{2\sqrt{t^2-1}}= \notag \\
&=\left(\frac{1}{2a}\right)^{n+1}\ U_{2^{n+1}-1}\left(2a^{2}x^{2}-1\right)
\end{align}
\end{proof}

\begin{proposition}
\label{propo:primaMn}
For each $n\geq 2$ we have
\begin{equation}
\label{eq:derivataMn}
\frac{d}{dx}M^a_{n}(x)=(4a)^{n}\ x \ \prod_{i=1}^{n-1}M^a_{i}(x)
\end{equation}
\end{proposition}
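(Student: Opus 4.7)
The proof proposal is a straightforward induction on $n$, modeled exactly on the argument for Lemma \ref{propo:prima}, with the only adjustments being the constants arising from the recursion $M^a_{n}=2a(M^a_{n-1})^{2}-\tfrac{1}{a}$ instead of $L_n = L_{n-1}^2 - 2$.

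\textbf{Base case.} For $n=2$ I would compute $M^a_{2}(x)=2a\bigl(2ax^{2}-\tfrac{1}{a}\bigr)^{2}-\tfrac{1}{a}=8a^{3}x^{4}-8ax^{2}+\tfrac{1}{a}$ and differentiate directly to get
\[
\frac{d}{dx}M^a_{2}(x)=32a^{3}x^{3}-16ax=16a^{2}\,x\,\Bigl(2ax^{2}-\tfrac{1}{a}\Bigr)=(4a)^{2}\,x\,M^a_{1}(x),
\]
which matches the formula for $n=2$ since the product $\prod_{i=1}^{1}M^a_{i}(x)=M^a_{1}(x)$.

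\textbf{Inductive step.} Assume the identity holds for some $n\geq 2$. Using the defining recursion $M^a_{n+1}(x)=2a\bigl(M^a_{n}(x)\bigr)^{2}-\tfrac{1}{a}$ and the chain rule,
\[
\frac{d}{dx}M^a_{n+1}(x)=4a\,M^a_{n}(x)\,\frac{d}{dx}M^a_{n}(x).
\]
Substituting the inductive hypothesis $\frac{d}{dx}M^a_{n}(x)=(4a)^{n}\,x\,\prod_{i=1}^{n-1}M^a_{i}(x)$ yields
\[
\frac{d}{dx}M^a_{n+1}(x)=4a\,M^a_{n}(x)\cdot(4a)^{n}\,x\,\prod_{i=1}^{n-1}M^a_{i}(x)=(4a)^{n+1}\,x\,\prod_{i=1}^{n}M^a_{i}(x),
\]
which closes the induction.

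\textbf{Expected obstacle.} There is essentially no serious difficulty: both the base case and the inductive step reduce to bookkeeping of the constants $2a$ and $1/a$ appearing in the recursion. The only thing to watch is that the factor $4a$ produced by differentiating the squared term in the recursion combines cleanly with $(4a)^{n}$ to give $(4a)^{n+1}$, which it does. The argument is the exact analogue of the one used for $L_n$, with $2=2^1$ replaced by $4a$ at each differentiation step.
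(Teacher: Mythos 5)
Your proof is correct and follows essentially the same route as the paper's: induction on $n$, with the base case $n=2$ by direct differentiation and the inductive step via the chain rule applied to the recursion $M^a_{n+1}=2a(M^a_n)^2-\tfrac1a$. In fact your base case is slightly more careful than the paper's, which displays $4ax\,M^a_1(x)$ where the correct constant is $(4a)^2 x\,M^a_1(x)$ as you computed.
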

\begin{proof}
We have first to prove it is true for $n=2$:
\begin{equation}
\frac{d}{dx}M^a_{2}(x)=\frac{d}{dx}\ \left[2a\left(2ax^2-\frac{1}{a}\right)^{2}-\frac{1}{a}\right]=4a x M^a_{1}(x)
\end{equation}
Assume it is true for $n$ and deduce that (\ref{eq:derivataMn}) is true for $n+1$, too. In fact $M^a_{n+1}=2a (M^a_{n})^{2}-\frac{1}{a}$. Write
\begin{equation}
\frac{d}{dx}M^a_{n+1}(x)=2a\frac{d}{dx}(M^a_{n})^{2}=4a M^a_{n} \ \frac{d}{dx}M^a_{n}
\end{equation}
and using (\ref{eq:derivataMn}) we arrive to:
\begin{equation}
\frac{d}{dx}M^a_{n+1}(x)=4a M^a_{n} \cdot \left[\ (4a)^{n}\ x \ \prod_{i=1}^{n-1}M^a_{i}(x) \right]= (4a)^{n+1}\ x \ \prod_{i=1}^{n}M^a_{i}(x)
\end{equation}
\end{proof}

\begin{remark}
It can be easily shown that, when $\displaystyle |x| \leq \frac{1}{a}$, replacing $x=\frac{1}{a}\cos\theta$ in the expression of $U_{2^{n}-1}\left(2a^{2}x^{2}-1\right)$ and taking into account that $2a^2x^2-1 = \cos(2 \theta)$:
\begin{equation}
\frac{\left(2a^{2}x^{2}-1+\sqrt{(2a^{2}x^{2}-1)^{2}-1}\right)^{2^{n}}-\left(2a^{2}x^{2}-1-\sqrt{(2a^{2}x^{2}-1)^{2}-1}\right)^{2^{n}}}{2\sqrt{(2a^{2}x^{2}-1)^{2}-1}}
\end{equation}
we again get the trigonometric expression (\ref{euler}).
\end{remark}

Factorizing out the minus sign in (\ref{eq:propchebyM1diversa}) and carrying out the imaginary unit from radical, we obtain:
$$M^a_{n}(x)= \frac{1}{2a}\left[(M^{a,+}_n)^{2^{n-1}} + (M^{a,-}_n)^{2^{n-1}}\right]$$
The module of both complex numbers $M^{a,+}_n$ and $M^{a,-}_n$ is unitary; in fact:
\begin{equation}
|M_{n}^{a,+}(x)|=|M_{n}^{a,-}(x)|=\sqrt{\left(2a^{2}x^{2}-1\right)^{2}+1-\left(2a^{2}x^{2}-1\right)^{2}}=1
\end{equation}

Then
\begin{align}
M^a_{n}(x)&=\frac{e^{i 2^n \vartheta(x)}+e^{-i 2^n \vartheta(x)}}{2a}=\frac{1}{a}\cos\left(2^n \vartheta(x) \right) \notag \\
\vartheta(x)&=\frac{1}{2} \arctan\left[\frac{\sqrt{1-\left(2a^{2}x^{2}-1\right)^2}}{2a^{2}x^{2}-1} \right]+b\pi=\theta(x)+b\pi
\end{align}
with $b$ a binary digit, and
\begin{equation}
M^a_{n}(x)=\frac{1}{a}\cos\left(2^n\theta(x)+2^n b\pi\right)=\frac{1}{a}\cos\left(2^n\theta(x)\right)
\end{equation}
If $x=\pm\frac{\sqrt{2}}{2a}$: $M^a_1(\pm\frac{\sqrt{2}}{2a}) = \frac{1}{a}\cos\left(\frac{\pi}{2}\right) = 0$; $M^a_2 (\pm\frac{\sqrt{2}}{2a})=\frac{1}{a}\cos\left(\pi\right)=- \frac{1}{a}$; $M^a_{n}(\pm\frac{\sqrt{2}}{2a})=\frac{1}{a}\cos\left(2^{n-2}\pi\right)=\frac{1}{a} \ ; \ n \geq 3$. Then the argument of $\displaystyle M^a_{n} \left(\pm\frac{\sqrt{2}}{2a}\right)$ is $0$ for every $n \ge 3$. For very large $|x|$, considering the iterative structure of the map $M^a_{n}$, we deduce immediately the asymptotic formula: $$M^a_{n} \sim (2a)^{2^{n-1}-1} \left(2a x^2-\frac{1}{a}\right)^{2^{n-1}},$$ $\forall n \geq 1$.

\section{Conclusions and perspectives.}
\label{sec:6}

In this paper we introduced a class of polynomials which follow the same recursive formula as the Lucas-Lehmer numbers. We showed several properties of the polynomials, including important links with the Chebyshev polynomials, proving their orthogonality with respect to a suitable weight.

This paper intended just to introduce this new class of polynomials. Much more aspects need to be deepened, concerning the properties of the polynomials and their applications.

A further progress in our work will consist in studying the distribution of the zeros of $L_n$. This topic is the subject of a paper in preparation. By Section \ref{sec:3}, the zeros, expressed in terms of nested radicals, allow us to generalize in infinite ways a well-known formula for the approximation of $\pi$. This formula allows us to obtain several other notable mathematical constants as limits of suitably weighted sequences of zeros of the Lucas-Lehmer polynomials $L_n$ and $M^a_n$.

Thanks to their strict link with the Chebyshev polynomials, we could determine other properties of the Lucas-Lehmer polynomials, mainly of integral and asymptotic type. These topics will be subject of future studies. Moreover, it would be interesting to determine and study different classes of Lucas-Lehmer polynomials, for example modifying suitably the first term of the sequence.

Finally, it is well known that Chebyshev polynomials can be applied in several fields of Mathematics, for example in Numerical Analysis and Combinatorics (see, for instance: \cite{Bel,Bhr,Cas,Gul,Mihaila,Swe,Yama,BEJS} and references therein). We think that also the Lucas-Lehmer polynomials could be adapted to solve similar problems.


\end{document}